\documentclass{article}
\usepackage{graphicx}
\usepackage{cite}
\usepackage{authblk}
\usepackage{amsmath,amssymb,amsthm,mathtools}
\usepackage[left=3cm,right=3cm,top=1.0in,bottom=1.0in]{geometry}
\usepackage[colorlinks=true]{hyperref}
\usepackage[nameinlink,noabbrev]{cleveref}

\newtheorem{theorem}{Theorem}[section]
\newtheorem{lemma}[theorem]{Lemma}

\theoremstyle{remark}
\newtheorem{remark}[theorem]{Remark}

\newcommand{\Gk}{\Gamma_k^+}
\newcommand{\Gtwo}{\Gamma_2^+}
\newcommand{\Sn}{\mathbb{S}}
\newcommand{\Hh}{\mathbb{H}}
\newcommand{\Rr}{\mathbb{R}}

\title{A Model-threshold Dimension Bound and Sharp Critical Ends\\
for Smooth Singular Sets of Constant Positive $\sigma_k$-curvature Metrics}
\author{Jiahuan Li, Yilu Liu, Xi-Nan Ma}
\date{}
\hypersetup{
  pdftitle={A Model-threshold Dimension Bound and Sharp Critical Ends for Smooth Singular Sets of Constant Positive sigma-k Curvature Metrics},
  pdfauthor={Jiahuan Li, Yilu Liu, Xi-Nan Ma}
}

\begin{document}
\maketitle

\begin{abstract}
Let $k\in\mathbb N$ satisfy $1<k<n/2$, and let
$\Sigma^p\subset\Sn^n$ be a closed smooth embedded
submanifold.  We prove that a complete conformal metric
$g=v^{-2}g_{\Sn^n}$ on $\Sn^n\setminus\Sigma$ satisfying
$\lambda(g^{-1}A_g)\in\Gk$ and
$\sigma_k(g^{-1}A_g)=\kappa>0$ must obey
\[
p\leq p_k(n),
\]
where $p_k$ is the model threshold determined by
$\Hh^{p+1}\times\Sn^{n-p-1}$.  When $k=2$ and $n=m^2$, we construct a
smooth complete equality example on
$\Sn^n\setminus\Sn^{(m^2-m-2)/2}$.  We also prove that the strict inequality
$p<p_k(n)$ holds under a finite positive linear-contact hypothesis.
\end{abstract}

\section{Introduction}

For a Riemannian metric $g$ in dimension $n\geq3$, its Schouten tensor is
\[
A_g=\frac{1}{n-2}\left(\operatorname{Ric}_g-
\frac{R_g}{2(n-1)}g\right).
\]
The G\aa rding cone $\Gk$ is the connected component of
$\{\sigma_k>0\}$ containing the positive orthant; equivalently, it consists
of the eigenvalue vectors for which $\sigma_1,\ldots,\sigma_k$ are all
positive~\cite{Garding,Viaclovsky}.  The singular $\sigma_k$-Yamabe problem
asks for complete conformal metrics on $\Sn^n\setminus\Sigma$ whose
Schouten tensor lies in this cone and whose $\sigma_k$ curvature is constant.

Writing $g=e^{-2u}g_{\Sn^n}$, the conformal transformation law is
\[
A_g=A_{g_{\Sn^n}}+\nabla^2u+du\otimes du
-\frac12|\nabla u|^2g_{\Sn^n}.
\]
Thus the curvature prescription is a fully nonlinear Hessian equation.  The
condition $\lambda(g^{-1}A_g)\in\Gk$ selects its elliptic branch, while
completeness imposes a global condition on the degeneration of the conformal
factor along $\Sigma$.  The dimension question considered below comes from
the interaction of these two requirements.

For $k=1$, one has
$\sigma_1(g^{-1}A_g)=R_g/(2(n-1))$, so the problem reduces to the Yamabe
equation.  The compact existence theory was completed by Schoen and is
surveyed by Lee--Parker~\cite{Schoen1984,LeeParker}.  Allowing a singular set
leads to a different program in which completeness replaces smooth extension.
Schoen--Yau obtained the basic Hausdorff-dimension obstruction for complete
locally conformally flat metrics of positive scalar curvature
\cite{SchoenYau}; Mazzeo--Smale constructed metrics on subdomains of the
sphere near equatorial models~\cite{MazzeoSmale}, Mazzeo--Pacard developed a
gluing construction for prescribed higher-dimensional singular sets
\cite{MazzeoPacard1996}, and Korevaar--Mazzeo--Pacard--Schoen established
refined asymptotics in the isolated-singularity setting
\cite{KorevaarMazzeoPacardSchoen}.

For $k\geq2$, the equation is fully nonlinear and is elliptic only after an
admissible branch has been chosen.  The general elliptic theory for symmetric
functions of Hessian eigenvalues was developed by
Caffarelli--Nirenberg--Spruck~\cite{CaffarelliNirenbergSpruck1985}.
In the conformal setting, Viaclovsky placed the $\sigma_k$-curvatures in a
variational framework~\cite{Viaclovsky}, while
Chang--Gursky--Yang treated the four-dimensional $\sigma_2$ equation by
Monge--Amp\`ere methods~\cite{ChangGurskyYang}.  The subsequent compact and
local theories include Schouten-tensor inequalities, a priori estimates,
Liouville and classification results, and existence theorems for higher-order
curvatures~\cite{GuanWang,GuanViaclovskyWang,LiLi2003,LiLi,
ShengTrudingerWang}.
These results provide the elliptic and conformal framework for the admissible
equation.  They do not, by themselves, determine which positive-dimensional
sets can occur as the singular locus of a complete solution.  That question
also depends on the geometry of small tubular neighborhoods of the singular
set.

The singular fully nonlinear theory has developed in parallel.  Singular
radial solutions were classified by Chang--Han--Yang
\cite{ChangHanYang2005}; Han--Li--Teixeira proved precise asymptotics near
isolated singularities~\cite{HanLiTeixeira}; and Mazzieri--Segatti constructed
complete metrics with Delaunay-type ends~\cite{MazzieriSegatti}.

More recent developments concern complete singular metrics and viscosity
solutions of the fully nonlinear Loewner--Nirenberg problem.
Gonz\'alez, Li, and Nguyen proved existence and uniqueness on smooth bounded
domains and obtained a sharp criterion for blow-up along smooth
higher-codimensional boundary components
\cite{GonzalezLiNguyen2018}.  Li and Nguyen subsequently showed that the
viscosity solution on an annulus is locally Lipschitz but fails to be
differentiable across an interior hypersurface \cite{LiNguyen2021}.  Li,
Nguyen, and Xiong established finer boundary regularity for the
$\sigma_k$-Loewner--Nirenberg problem and further non-differentiability
results on multiply connected domains \cite{LiNguyenXiong2023}.  On the
closed sphere, Li, Nguyen, and Wang proved existence and compactness results
for the prescribed $\sigma_k$-curvature problem in the range $k\geq n/2$
\cite{LiNguyenWang2024}.

A complementary weak theory was recently developed by Ma and Wu
\cite{MaWu2025}.  They found a divergence structure for the
$\sigma_k$-Yamabe operator and proved weak continuity of the associated
curvature measure under local $L^1$ convergence.  These works concern
existence, regularity, compactness, and weak convergence.  The question
addressed here is different: we seek a necessary upper bound for the
dimension of a smooth singular set supporting a complete admissible metric
of positive constant $\sigma_k$-curvature.  The smoothness assumption supplies
Fermi coordinates and a controlled splitting into tangential, radial, and
normal-spherical directions.  This makes it possible to compare the equation
at successive tubular scales with the corresponding product model.

For general closed singular sets, the first obstruction relevant here is the
following result of Gonz\'alez.  She proved~\cite[Theorem~1.1]{Gonzalez2005}
that if $g$ is complete,
$\sigma_1\geq c>0$, and $\sigma_2,\ldots,\sigma_k\geq0$, then
\begin{equation}
\dim_H\Sigma\leq\frac{n-2k}{2}.
\label{eq:old-bound}
\end{equation}
The product model $\Hh^{p+1}\times\Sn^{n-p-1}$ predicts a smaller
threshold.  Its Schouten eigenvalues, after multiplication by $2$, are
$+1$ with multiplicity $n-p-1$ and $-1$ with multiplicity $p+1$.  Put
\begin{equation}
c_{n,p,j}:=\sum_{i=0}^j(-1)^{j-i}
\binom{n-p-1}{i}\binom{p+1}{j-i},
\label{eq:c}
\end{equation}
and extend the binomial coefficients polynomially in real $p$.  Define
\begin{equation}
p_k(n):=\sup\{p\geq0:c_{n,p,j}>0\text{ for }1\leq j\leq k\}.
\label{eq:pk}
\end{equation}
For example,
\[
p_2(n)=\frac{n-\sqrt n-2}{2},
\qquad
p_3(n)=\frac{n-2-\sqrt{3n-2}}{2}.
\]
The definition has a direct geometric interpretation.  The complement of an
equatorial $\Sn^p\subset\Sn^n$ is conformal to
$\Hh^{p+1}\times\Sn^{n-p-1}$, and, up to the fixed normalization of the
Schouten eigenvalues, $c_{n,p,j}$ is the $j$-th elementary symmetric function
of the product spectrum.  Hence $p_k(n)$ is the endpoint of model
$\Gk$-admissibility.  In the two displayed cases it improves
\eqref{eq:old-bound} by $(\sqrt n-2)/2$ for $k=2$ and by
$(\sqrt{3n-2}-4)/2$ for $k=3$.
Gonz\'alez--Mazzieri~\cite{GonzalezMazzieri} constructed the corresponding
non-isolated radial models for $k=2$.  Building on the scalar-curvature gluing
scheme, Espinal--Gonz\'alez then produced $\sigma_2$ solutions with prescribed
smooth singular submanifolds below the same threshold
\cite{EspinalGonzalez}.  Their March 2026 revision formulated the conjectured
necessary condition for general $k$ as $p<p_k(n)$.

Our results describe both the force and the limitation of this model
prediction.  For every $1<k<n/2$, we prove the model-threshold bound without
assuming an asymptotic expansion of the conformal factor.  A finite positive
linear-contact condition excludes the endpoint.  In contrast, for $k=2$ we
construct complete critical metrics at the integral model endpoints, showing
that the non-strict conclusion is optimal there.  We first state the general
dimension theorem.

\begin{theorem}\label{thm:main}
Let $k\in\mathbb N$ satisfy $1<k<n/2$, let
$\Sigma^p\subset\Sn^n$ be a closed smooth embedded
submanifold, and let
\[
g=v^{-2}g_{\Sn^n}
\]
be complete on $\Sn^n\setminus\Sigma$.  Assume
\[
\lambda(g^{-1}A_g)\in\Gk,
\qquad
\sigma_k(g^{-1}A_g)\equiv\kappa>0.
\]
Then
\[
p\leq p_k(n).
\]
\end{theorem}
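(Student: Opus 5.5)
Suppose $p>p_k(n)$; we derive a contradiction. We work in Fermi coordinates around $\Sigma$. Writing $x=(y,r,\theta)$ with $y\in\Sigma$, $r=\operatorname{dist}(x,\Sigma)$ and $\theta\in\Sn^{n-p-1}$, we set $w=v/r$. For the flat model $v=r$, i.e.\ $w\equiv1$, the metric $r^{-2}(dy^2+dr^2+r^2d\theta^2)$ is $\Hh^{p+1}\times\Sn^{n-p-1}$. By the conformal transformation law (written for $v$), $2A_g$ in a $g$-orthonormal frame equals
\[
2v\nabla^2v-|\nabla v|^2 g_{\Sn^n}+v^2 g_{\Sn^n}.
\]
Its eigenvalues split into three blocks: tangential ($p$ directions), radial (one direction), and normal-spherical ($n-p-1$ directions).

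\textbf{Step 1 (blow-up limits).} Completeness near $\Sigma$ together with the admissibility $\lambda\in\Gk$ (so that $\sigma_1>0$, i.e.\ the metric has positive scalar curvature) should give two-sided control $c\,r\le v\le C r$. The lower bound comes from completeness; the upper bound comes from the local gradient estimates for admissible $\sigma_k$ solutions (Guan--Wang, Li--Li) applied on balls of radius comparable to $r$. We then rescale at a point $y_0\in\Sigma$ at scale $\rho\to0$. The rescaled functions $v_\rho(x)=\rho^{-1}v(y_0+\rho x)$ solve the same equation, with the curvature term $\kappa$ scaled away (the equation becomes $\sigma_k=\kappa$ in $g$, which is scale invariant in $g$), and the error from $g_{\Sn^n}$ versus the flat metric tends to zero. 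Using uniform $C^{1,1}$ or at least Lipschitz bounds for $v_\rho$ from the interior estimates, we extract a limit $v_0$ on $\Rr^n\setminus\Rr^p$. This limit gives a complete, $\Gk$-admissible metric $v_0^{-2}|dx|^2$ with $\sigma_k=\kappa$ in the viscosity sense, and $c|x''|\le v_0\le C|x''|$, where $x''$ denotes the normal component.

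\textbf{Step 2 (reduction to the model).} Next we average, or take a further blow-down and symmetrize, to obtain a limit depending only on $|x''|$ and homogeneous of degree one, i.e.\ $v_0=a|x''|$. Alternatively, we compare via the maximum principle with explicit radial subsolutions and supersolutions; this is the comparison at successive tubular scales mentioned in the introduction. The cleanest route is to apply the maximum principle at a point where $w=v/r$ attains its extremal value on a thin tube $\{r=\rho\}$, or more precisely where $\liminf w$ or $\limsup w$ is attained along a sequence. At such a point the tangential and angular first derivatives of $w$ vanish, and the second-derivative signs are known. Then $2A_g$ is, up to $o(1)$, diagonal. Its tangential and radial eigenvalues are $\le -a^2+o(1)$ (respectively $\ge$ at the opposite extremum), and its spherical eigenvalues are $\approx a^2$. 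After normalizing by $a^2$, the spectrum is within $o(1)$ of the model spectrum, perturbed in a sign-controlled direction.

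\textbf{Step 3 (contradiction).} For $p>p_k(n)$ some $c_{n,p,j}<0$ with $j\le k$. Here we use monotonicity of $\sigma_j$ on $\Gk$: the tangential and radial eigenvalues are bounded above by the model value $-1$ at the appropriate extremum, while the spherical eigenvalues are pinned near $+1$. Hence $\sigma_j(\lambda)\le a^{2j}c_{n,p,j}+o(1)<0$, contradicting $\lambda\in\Gk$. The case where $a\to\infty$ or $a\to 0$ is excluded by Step 1.

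The main obstacle is Step 2: the limiting radial eigenvalue is not automatically $\le -a^2$, since $\partial_r^2 w$ has no sign at a tangential extremum. We would try choosing $\rho$ along which $\rho\mapsto\max_{r=\rho}w$ is extremal in $\rho$ as well (using that $\log w$ is bounded in $\log\rho$, so near-critical scales exist by a Harnack-type argument), so that the radial terms are controlled too. The possible failure at equality $p=p_k(n)$ is consistent with the sharp examples.
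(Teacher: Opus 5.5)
There is a genuine gap, and it sits in Step~1; Steps~2--3 inherit it. The two-sided bound $c\,r\le v\le C\,r$ does not follow from what you cite. Interior gradient estimates on balls of radius comparable to $r$ (in the paper, Chang--Han--Yang ball convexity) give only $r|D\log v|\le C$. That says $f=\log(v/r)$ is Lipschitz in $t=-\log r$, which is compatible with $v/r$ growing like a power of $1/r$. Completeness cannot supply a lower bound on $v$ at all, since shrinking $v$ only lengthens curves; what completeness constrains is $v$ being \emph{large}. In fact the upper bound is false in general. Your Step~1 never uses $p>p_k(n)$, yet the equality metrics of \cref{thm:sharp} have $v\sim c\,r(\log(1/r))^{1/4}$, so $v/r\to\infty$. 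Without an a priori bound on $\log w$, nothing guarantees scales with $\partial_tf\approx0$ and $\partial_t^2f\lesssim0$: if $\rho\mapsto\max_{r=\rho}w$ is monotone, there are none. So your near-critical-scale fix for the radial eigenvalue presupposes exactly what is missing. Separately, averaging or symmetrizing does not preserve a fully nonlinear equation, so the homogeneous reduction $v_0=a|x''|$ in Step~2 is unavailable as stated.

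The missing idea is that the contradiction is not pointwise and algebraic; it comes from completeness. The paper uses the exclusion $D(\alpha)\notin\overline{\Gk}$ for \emph{all} $\alpha\le\bar a$ (\cref{lem:model}), not just $\alpha\approx0$, and turns it into a differential inequality for upper tests of the radial profile. In the block decomposition, $\sigma_j(M)=2\eta''\sigma_{j-1}(\widetilde C)+\widetilde G_j>0$ with $\widetilde G_j\le-\delta_0/2$, so $\phi'\le\bar a$ forces $\phi''\ge h$ (\cref{lem:acceleration}). A one-dimensional propagation estimate (\cref{lem:peak-preparations}(i)) then makes the penalized cross-sectional maximum of $f$ gain a fixed $\Delta>0$ on each logarithmic scale of length $T$. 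The quadratic tangential penalty keeps successive maximizers within $\Lambda\rho_j$ of each other, so they can be joined by curves of $g$-length at most $C_0e^{-j\Delta}$. The concatenated curve reaches $\Sigma$ with finite length, contradicting completeness. If you had an a priori bound $v\le Cr$, the acceleration lemma alone would finish, since a bounded $f$ cannot gain $\Delta$ on every scale. But that bound is essentially the content of the theorem, and proving it is exactly where completeness has to enter.
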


The theorem converts the algebraic failure of the product spectrum to lie in
$\Gk$ into an obstruction to completeness.  Since no limit for
$v/\operatorname{dist}(\cdot,\Sigma)$ is assumed, the argument uses upper
contacts rather than a prescribed leading asymptotic term.  We briefly
indicate the main mechanism.  Chang--Han--Yang ball
convexity~\cite[Theorem~1.10]{ChangHanYang} gives the scale-invariant estimate
\[
\rho |D\log v|\leq C,
\qquad
\rho=\operatorname{dist}(\,\cdot\,,\Sigma).
\]
The Fermi-coordinate calculation, combined with the supercritical separation
from the model G{\aa}rding cone, shows that every admissible upper test $\phi$
of the radial logarithmic profile satisfies
\[
\phi'\leq\bar a
\quad\Longrightarrow\quad
\phi''\geq h
\]
for uniform constants $\bar a,h>0$.  A quadratic tangential penalty and a
one-dimensional propagation estimate then produce a fixed increase on each
logarithmic tubular scale.  The resulting connecting curves have summable
$g$-lengths, contradicting completeness.  For $k=2$, the cone separation is
an explicit quadratic calculation; we record this specialization after the
general proof for comparison.

For $k=2$, we construct explicitly the case that $p=p_2(n)$.

\begin{theorem}\label{thm:sharp}
For every integer $m\geq3$ and every $\kappa>0$, put
\[
n=m^2,
\qquad
p=\frac{m^2-m-2}{2}=p_2(n).
\]
There exists a smooth complete metric $g$ on
$\Sn^n\setminus\Sn^p$ such that
\[
g\in[g_{\Sn^n}],
\qquad
\lambda(g^{-1}A_g)\in\Gtwo,
\qquad
\sigma_2(g^{-1}A_g)\equiv\kappa.
\]
\end{theorem}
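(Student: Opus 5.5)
The plan is to reduce the construction to an autonomous ODE by exploiting the symmetry of the model, and then to find a suitable orbit in the phase plane.

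\emph{Step 1: conformal picture.} By stereographic projection, $\Sn^n\setminus\Sn^p$ is conformal to $\Rr^n\setminus\Rr^p=\{(x,y):x\in\Rr^p,\ y\in\Rr^{q+1}\setminus\{0\}\}$, where $q=n-p-1$. Multiplying the flat metric by $|y|^{-2}$ gives $\Hh^{p+1}\times\Sn^{q}$, with $\Hh^{p+1}$ in upper half-space form. Put $s=-\log|y|$. Then
\[
g_0=ds^2+e^{2s}|dx|^2+g_{\Sn^q}.
\]
I would look for $g=e^{2w(s)}g_0$. The map $(s,x)\mapsto(s+c,e^{-c}x)$ is an isometry of $g_0$, so the equation $\sigma_2(g^{-1}A_g)=\kappa$ becomes an autonomous second-order ODE in $w$. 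Because $w$ depends only on $|y|$, the metric $g$ is automatically smooth on $\Sn^n\setminus\Sn^p$.

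\emph{Step 2: the ODE.} I would compute $A_g$ from the transformation law in the introduction. It has three eigenvalues:
\begin{itemize}
\item $\mu_s$, of multiplicity $1$, in the $s$-direction;
\item $\mu_x$, of multiplicity $p$, in the horospherical $x$-directions;
\item $\mu_\theta$, of multiplicity $q$, in the $\Sn^q$-directions.
\end{itemize}
Each is an explicit expression in $w'$, $w''$ and the model values $\mp\tfrac12$. The equation then reads
\[
e^{-4w}\,\sigma_2(\mu_s,\mu_x^{(p)},\mu_\theta^{(q)})=\kappa ,
\]
which is linear in $w''$. At $p=p_2(n)$ the model constant vanishes, $c_{n,p,2}=0$. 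The integrality of $p=(m^2-m-2)/2$ is used to make this exact, with $q=(m^2+m)/2$ and $q-p-1=m$. I expect the equation to have a divergence structure coming from the $s$-translation invariance, and hence a first integral $E(w,w')=\text{const}$. I would verify this directly: multiply by the appropriate integrating factor, in the spirit of the Chang--Han--Yang and Gonz\'alez--Mazzieri radial analyses.

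\emph{Step 3: phase-plane analysis.} Using the first integral, I would select a level set on which the following hold.
\begin{enumerate}
\item[(a)] The admissibility conditions $\sigma_1>0$ and $\sigma_2=\kappa e^{4w}>0$ hold along the whole orbit. Degenerate ellipticity is the danger here, because $c_{n,p,2}=0$ means the unperturbed point sits exactly on $\partial\Gtwo$.
\item[(b)] The solution exists for all $s\in\Rr$.
\item[(c)] $\int_{-\infty}^0e^{w}\,ds=\int_0^{\infty}e^{w}\,ds=\infty$.
\end{enumerate}
Condition (c), together with the homogeneity of the $x$-directions, gives completeness: every divergent curve must either have $s\to\pm\infty$ or run off to $x\to\infty$ at fixed $s$, and the latter has infinite length. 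The natural candidate is an orbit along which $w'\to0$ at both ends. Such an orbit is a critical end, since the model itself is the borderline. Alternatively, $w$ could be bounded below at both ends, which makes (c) immediate.

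Using $\kappa>0$ I would produce it as follows. Linearize around the borderline state to see that $\sigma_2$ becomes positive to first order when $w'$ has a definite sign and $w''$ is adjusted. Then continue the orbit by the energy identity. Finally, rescale $\kappa$ by adding a constant to $w$.

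\emph{Main obstacle.} The hard step is (a) combined with (c): showing that some energy level gives a global orbit that stays strictly inside $\Gtwo$ while $e^{w}$ is non-integrable at both ends. The degeneracy $c_{n,p,2}=0$ makes the leading-order balance vanish. Admissibility must therefore come from the next-order terms, and that is where I expect the special arithmetic $n=m^2$ to be needed in order to get an explicit closed-form orbit, for instance $e^{w}$ a power of $\cosh$ in $s$. I would first try such an explicit ansatz, and only then fall back on a qualitative phase-plane argument.
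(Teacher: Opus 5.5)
There is a genuine gap, and it lies in the ansatz itself, before any phase-plane analysis. Put $a=-w'$ and $z=e^{4w}$, so $z'=-4az$. Carrying out your Step 2, the eigenvalues of $2g_0^{-1}A_g$ are $\mu_s=-1+a^2+2a'$, $\mu_x=-(1-a)^2$ (multiplicity $p$) and $\mu_\theta=1-a^2$ (multiplicity $q$). With $p=p_2(m^2)$ the equation $\sigma_2(\mu)=4\kappa z$ reads
\[
2C(a)\,a'=4\kappa z-(1-a)^2R_2(a),\qquad C(a)=(m+1)(1-a)\bigl(1+(m-1)a\bigr),
\]
where $R_2(a)=(m+1)(m-2)\,a\bigl[(m+1)+\tfrac{(m-1)(m+2)}{2}a\bigr]$ is the model function of \cref{lem:model}.

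Since $C=\partial\sigma_2/\partial\mu_s$, admissibility forces $-\frac{1}{m-1}<a<1$. On $(-\frac1{m-1},0)$ one has $R_2<0$, and at $a=0$ one has $a'=2\kappa z/(m+1)>0$, so $a$ changes sign at most once. As $s\to-\infty$ there are two cases.
\begin{itemize}
\item If $a<0$, then $a'\geq\gamma>0$, which pushes $a$ below $-\frac1{m-1}$ in finite backward time.
\item If $a>0$, then $z$ is bounded below. Positivity of $a'$ near $a=0$ forces $a\geq\delta>0$, so $z\to\infty$ exponentially and $a'\geq c_1z-c_2\to\infty$, contradicting $a<1$.
\end{itemize}
So no admissible solution exists on any half-line $(-\infty,s_0]$. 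The horospherical ansatz gives no smooth $\Gtwo$ metric on $\Sn^n\setminus\Sn^p$ at all, complete or not, and your condition (b) already fails.

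The mechanism is a friction term coming from the $2a$ in $\mu_x$, i.e.\ from the umbilic horospheres of mean curvature $p$. At the model the linearization is $a'=-pa+\frac{2\kappa}{m+1}z$, $z'=-4az$. This is a saddle-node whose half $\{z>0\}$ is a forward-attracting parabolic sector, so no orbit leaves the model as $s\to-\infty$; this rules out your ``$w'\to0$ at both ends'' candidate. For the same reason the reduced functional carries the weight $e^{ps}$, and the Noether energy of the $s$-shift is not conserved. The expected first integral in Step 2 is therefore unavailable, and no explicit global orbit such as a power of $\cosh s$ can exist.

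The missing idea is to use the other cohomogeneity-one structure, as the paper does. Take polar coordinates about an interior point of $\Hh^{p+1}$, $g_0=ds^2+\sinh^2s\,g_{\Sn^p}+g_{\Sn^q}$, with $O(p+1)\times O(q+1)$ symmetry. Then $\mu_x$ becomes $-1+2a\coth s-a^2$, and the impossible end $s\to-\infty$ is replaced by the collapsed orbit $s=0$. This is a regular-singular point where one prescribes $a(0)=0$, $z(0)=\varepsilon$; the paper solves there by a weighted contraction and gets smoothness from \cref{lem:regular-singular}.

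The only remaining end, $s\to\infty$, is exactly the forward-attracting center-manifold end. There $a>0$, $z\downarrow0$, $a/z\to2\kappa/(p(m+1))$ and $z\sim L_c/(16\kappa s)$, so $\int e^{w}\,ds=\int z^{1/4}\,ds=\infty$ gives completeness. Admissibility does not come from next-order terms along a special orbit. For small $\varepsilon$ every eigenvalue stays $O(\varepsilon)$-close to the model, so $\sigma_1=m+O(\varepsilon)>0$, while the equation itself gives $\sigma_2=4\kappa z>0$. The hypothesis $n=m^2$ only makes $p_2(n)$ an integer; no closed-form solution is involved.
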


The construction starts from the critical product
$\Hh^{p+1}\times\Sn^{n-p-1}$, whose $\sigma_2$ curvature vanishes at the
endpoint, and deforms it within the spherical conformal class to obtain any
prescribed constant $\kappa>0$ while retaining completeness and
$\Gtwo$-admissibility.  The smallest example is a complete metric on
$\Sn^9\setminus\Sn^2$.  Although the equality examples rule out a universal
strict theorem, we prove that strictness is recovered under the finite
positive linear-contact condition
\[
0<\limsup_{x\to\Sigma}
\frac{v(x)}{\operatorname{dist}(x,\Sigma)}<\infty.
\]
This condition describes the regime in which the conformal factor has a
nondegenerate linear scale along a sequence approaching $\Sigma$.  The
rescaled Schouten tensors then approach the product spectrum strongly enough
for the positive constant $\sigma_k$ equation to exclude equality in the
model coefficient.

Section~2 develops the model algebra, cylindrical estimates, Fermi-coordinate
upper-contact calculation, and regular-singular lemma.  Section~3 proves the
dimension bound for arbitrary $k$ and the strict inequality under finite
positive linear contact.  Section~4 gives the explicit quadratic
specialization, constructs the global equality examples, and states the
resulting optimality at integral model endpoints.

\section{Preliminaries and Some Basic Facts}

Throughout this section, we assume that
$(g,\Sigma,k,n,p,\kappa)$ satisfy the hypotheses of \cref{thm:main}, unless otherwise specified.

\subsection{Algebraic facts and cylindrical estimates}

Let
\begin{equation}
\mathcal P_{j,n}(d):=j![t^j](1+t)^{(n+d)/2}(1-t)^{(n-d)/2}.
\label{eq:P}
\end{equation}
Differentiating the generating function gives that
\begin{equation}
\mathcal P_{0,n}=1,
\quad
\mathcal P_{1,n}=d,
\quad
\mathcal P_{j+1,n}=d\mathcal P_{j,n}
-j(n-j+1)\mathcal P_{j-1,n}.
\label{eq:recurrence}
\end{equation}

For $q=n-p-1$ and $\alpha\in\Rr$, set
\[
D(\alpha):=
\bigl((1+\alpha)^{[q]},-(1+\alpha),
(-(1-\alpha))^{[p]}\bigr),
\qquad
R_j(\alpha):=\sigma_j(D(\alpha)).
\]
\begin{lemma}\label{lem:model}
Let $d_{k,n}$ be the largest zero of $\mathcal P_{k,n}$.  Then
\[
p_k(n)=\frac{n-2-d_{k,n}}{2}.
\]
For $d\geq0$,
\begin{align}\label{equal1}
    \mathcal P_{j,n}(d)>0\quad(1\leq j\leq k)
\quad\Longleftrightarrow\quad d>d_{k,n},
\end{align}

and
\begin{align}\label{equal2}
    \mathcal P_{j,n}(d)\geq0\quad(1\leq j\leq k)
\quad\Longleftrightarrow\quad d\geq d_{k,n}.
\end{align}

Moreover, if $p>p_k(n)$, then there exist
$\bar a\in(0,1)$ and $\delta_R>0$ such that
\[
\min_{1\leq j\leq k}R_j(\alpha)\leq-\delta_R
\]
for every $\alpha\leq\bar a$.
\end{lemma}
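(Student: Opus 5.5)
The plan is to identify the model coefficients with the polynomials $\mathcal P_{j,n}$, read off the zero structure of $\mathcal P_{j,n}$ from the three-term recurrence \eqref{eq:recurrence}, and then get the perturbed statement about $R_j(\alpha)$ from a cone-monotonicity argument plus compactness.

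\textbf{Step 1 (dictionary).} Put $q=n-p-1$ and $d:=q-(p+1)=n-2-2p$. Then $(n+d)/2=q$ and $(n-d)/2=p+1$. Since the elementary symmetric functions of a vector are the coefficients of $\prod(1+\lambda_i t)$, definition \eqref{eq:c} gives
\[
c_{n,p,j}=[t^j](1+t)^{q}(1-t)^{p+1}=\mathcal P_{j,n}(d)/j!.
\]
This also holds for real $p$, since both sides are polynomial in $p$. Moreover $R_j(0)=\sigma_j(D(0))=c_{n,p,j}$. The map $p\mapsto d$ is decreasing, so the formula $p_k(n)=(n-2-d_{k,n})/2$ follows from \eqref{equal1}. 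Indeed, \eqref{equal1} shows that the admissible set in \eqref{eq:pk} is $\{0\le p<(n-2-d_{k,n})/2\}$. I would check separately that $d_{k,n}\le n-2$, using $k<n/2$, so that this set is nonempty.

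\textbf{Step 2 (zeros of $\mathcal P_{j,n}$).} For $j\le k<n/2$ the recurrence coefficients $j(n-j+1)$ are positive. Hence $\{\mathcal P_{j,n}\}_{j\le k}$ is an orthogonal-polynomial-type sequence (Favard). Each $\mathcal P_{j,n}$ is monic up to a positive factor, has $j$ simple real zeros, and consecutive polynomials interlace. In particular the largest zeros satisfy $d_{1,n}=0<d_{2,n}<\dots<d_{k,n}$, and $\mathcal P_{j,n}$ has exactly one zero in $(d_{j-1,n},\infty)$. It follows that $\mathcal P_{j,n}>0$ on $(d_{j,n},\infty)$ and $\mathcal P_{j,n}<0$ on $(d_{j-1,n},d_{j,n})$.

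\begin{itemize}
\item If $d>d_{k,n}$, all $\mathcal P_{j,n}(d)>0$ for $j\le k$.
\item If $0\le d<d_{k,n}$, take the smallest $j$ with $d<d_{j,n}$ (so $j\ge2$). Either $d\in(d_{j-1,n},d_{j,n})$, giving $\mathcal P_{j,n}(d)<0$. Or $d=d_{j-1,n}$, and then the recurrence gives $\mathcal P_{j,n}(d)=-(j-1)(n-j+2)\mathcal P_{j-2,n}(d)<0$, because $d>d_{j-2,n}$.
\item At $d=d_{k,n}$ all values are $\ge0$, and $d=0$ gives $\mathcal P_{1,n}=0$.
\end{itemize}

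These cases together yield both \eqref{equal1} and \eqref{equal2}.

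\textbf{Step 3 (uniform negativity of $\min_j R_j$).} Suppose $p>p_k(n)$, so $d<d_{k,n}$. By \eqref{equal2}, and treating any negative-$d$ range by the same interlacing sign count, some $R_j(0)<0$. Thus $D(0)\notin\overline{\Gk}=\{\sigma_1,\dots,\sigma_k\ge0\}$.

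\begin{itemize}
\item \emph{Range $\alpha\in(-1,0)$.} Since $-(1-\alpha)=-(1+\alpha)+2\alpha$, we have
\[
D(\alpha)=(1+\alpha)D(0)+2\alpha\,(0^{[q]},0,1^{[p]}),
\]
with $2\alpha<0$. If $D(\alpha)\in\overline{\Gk}$, then using $\overline{\Gk}+\overline{\Gamma_n^+}\subset\overline{\Gk}$ and the cone property we would get $D(0)\in\overline{\Gk}$, a contradiction. Hence $D(\alpha)\notin\overline{\Gk}$, i.e.\ $\min_j R_j(\alpha)<0$.
\item \emph{Range $\alpha\le-1$.} Directly,
\[
R_1(\alpha)=(q-1)(1+\alpha)-p(1-\alpha)\le-2p<0.
\]
This bound is uniform in $\alpha$.
\item \emph{Small $\alpha>0$.} By continuity at $\alpha=0$, we can choose $\bar a\in(0,1)$ so that $\min_j R_j<0$ on $[0,\bar a]$.
\end{itemize}

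Finally, $\alpha\mapsto\min_j R_j(\alpha)$ is continuous and strictly negative on the compact interval $[-1,\bar a]$, so it is bounded above by some $-\delta'<0$ there. Taking $\delta_R=\min(\delta',2p)$ then covers all $\alpha\le\bar a$.

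I expect the main obstacle to be Step 2. One must justify rigorously the Favard interlacing for the finite recurrence with $j\le k<n/2$. One must also handle the boundary cases $d=d_{j-1,n}$ and real non-integer $p$, including possibly negative $d$. The cone argument in Step 3 is comparatively routine once $D(0)\notin\overline{\Gk}$ is established.
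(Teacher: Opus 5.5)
Your proposal is correct and follows the paper's proof essentially step for step: the identification $c_{n,p,j}=\mathcal P_{j,n}(d)/j!$ with $d=n-2-2p$, Jacobi-recurrence interlacing with the same minimal-index case split (including the boundary case $d=d_{j-1,n}$ handled through the recurrence), and then, for $R_j$, cone monotonicity on $(-1,0]$, continuity for small $\alpha>0$, the bound $R_1(\alpha)\le-2p$ for $\alpha\le-1$, and compactness on $[-1,\bar a]$. The only differences are small refinements: you check that the admissible set in the definition of $p_k(n)$ is nonempty, and you dispose of $d<0$ directly (the clean reason is simply $\mathcal P_{1,n}(d)=d<0$) rather than invoking Gonz\'alez's bound $d\ge 2k-2$; you should also state that $p>p_k(n)\ge0$ gives $p>0$, which is what makes $-2p<0$.
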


\begin{proof}
We obeserve that\eqref{eq:recurrence} is a monic three-term
Jacobi recurrence whose recurrence coefficients
$j(n-j+1)$ are positive for $j<n$.  Standard Jacobi-matrix
theory therefore implies that every zero of
$\mathcal P_{j,n}$ is real and simple and that the zeros of
consecutive polynomials strictly interlace \cite{Szego}.

Let $r_j$ denote the largest zero of $\mathcal P_{j,n}$, then
\[
0=r_1<r_2<\cdots<r_k.
\]
If $d\geq r_k$, then $d\geq r_j$ for every $1\leq j\leq k$.
Since $\mathcal P_{j,n}$ is monic and has no zero larger than
$r_j$, it follows that
\[
\mathcal P_{j,n}(d)\geq0
\qquad (1\leq j\leq k).
\]
All these inequalities are strict when $d>r_k$.

Conversely, suppose that $0\leq d<r_k$, and define
\[
m:=\min\{\ell\in\{2,\ldots,k\}:d<r_\ell\}.
\]
The minimality of $m$ gives
\[
r_{m-1}\leq d<r_m.
\]
By strict interlacing, the second largest zero of
$\mathcal P_{m,n}$ lies strictly below $r_{m-1}$.
Consequently,
\[
\mathcal P_{m,n}(d)<0
\qquad\text{if }r_{m-1}<d<r_m.
\]
For $d=r_{m-1}$, the recurrence
\eqref{eq:recurrence} and
$\mathcal P_{m-1,n}(r_{m-1})=0$ give
\[
\mathcal P_{m,n}(r_{m-1})
=-(m-1)(n-m+2)
  \mathcal P_{m-2,n}(r_{m-1})<0.
\]
For $m\geq3$, we have $r_{m-1}>r_{m-2}$, and hence
\[
\mathcal P_{m-2,n}(r_{m-1})>0.
\]
The same conclusion holds for $m=2$ because
$\mathcal P_{0,n}=1$.  Therefore
$\mathcal P_{m,n}(d)<0$ throughout the interval
\[
r_{m-1}\leq d<r_m.
\]
It follows that the polynomials
$\mathcal P_{1,n}(d),\ldots,\mathcal P_{k,n}(d)$ cannot all
be nonnegative when $0\leq d<r_k$.  Since
$d_{k,n}=r_k$, we derive \eqref{equal1} and \eqref{equal2}.

The preceding argument proves both sign characterizations.

To compute the threshold, set
\[
d=n-2p-2.
\]
Then
\[
\frac{n+d}{2}=n-p-1=q,
\qquad
\frac{n-d}{2}=p+1,
\]
and \eqref{eq:P} gives
\[
\mathcal P_{j,n}(d)
=j![t^j](1+t)^q(1-t)^{p+1}
=j!c_{n,p,j}.
\]
Since $\mathcal P_{1,n}(d)=d$, simultaneous positivity of
$c_{n,p,1},\ldots,c_{n,p,k}$ requires $d>0$.  The strict
sign characterization proved above therefore gives
\[
c_{n,p,j}>0\quad(1\leq j\leq k)
\quad\Longleftrightarrow\quad
n-2p-2>d_{k,n}.
\]
Equivalently,
\[
c_{n,p,j}>0\quad(1\leq j\leq k)
\quad\Longleftrightarrow\quad
p<\frac{n-2-d_{k,n}}{2}.
\]
Taking the supremum in the definition of $p_k(n)$ yields
\[
p_k(n)=\frac{n-2-d_{k,n}}{2}.
\]

We now prove the uniform separation in the last part of the
lemma.  By \eqref{eq:old-bound},
\[
p\leq\frac{n-2k}{2},
\]
and hence
\[
d=n-2p-2\geq2k-2>0.
\]
Suppose that $p>p_k(n)$.  The threshold formula gives
$d<d_{k,n}$.  The nonnegative sign characterization then
provides an index $j_0\in\{1,\ldots,k\}$ such that
\[
\mathcal P_{j_0,n}(d)<0.
\]
Since
\[
R_j(0)=\sigma_j(D(0))
=c_{n,p,j}
=\frac{1}{j!}\mathcal P_{j,n}(d),
\]
we conclude that
\[
D(0)\notin\overline{\Gk}.
\]

For $-1<\alpha\leq0$, define
\[
\widehat D(\alpha):=\frac{D(\alpha)}{1+\alpha}.
\]
The definition of $D(\alpha)$ gives
\[
D(0)-\widehat D(\alpha)
=
\left(
0^{[q+1]},
\left(-\frac{2\alpha}{1+\alpha}\right)^{[p]}
\right).
\]
Every component on the right-hand side is nonnegative when
$-1<\alpha\leq0$, and therefore
\[
D(0)-\widehat D(\alpha)\in\mathbb R_+^n.
\]

The closed G\aa rding cone is monotone under addition of a
nonnegative vector \cite{Garding,Viaclovsky}:
\[
\overline{\Gk}+\mathbb R_+^n
\subset\overline{\Gk}.
\]
If $\widehat D(\alpha)$ belonged to $\overline{\Gk}$, then
\[
D(0)
=\widehat D(\alpha)
 +\bigl(D(0)-\widehat D(\alpha)\bigr)
\]
would also belong to $\overline{\Gk}$, contradicting the
exclusion of $D(0)$.  Thus
\[
\widehat D(\alpha)\notin\overline{\Gk}
\qquad(-1<\alpha\leq0).
\]
Since $D(\alpha)=(1+\alpha)\widehat D(\alpha)$ and
$1+\alpha>0$, positive homogeneity of the closed cone gives
\[
D(\alpha)\notin\overline{\Gk}
\qquad(-1<\alpha\leq0).
\]
Consequently,
\[
\min_{1\leq j\leq k}R_j(\alpha)<0
\qquad(-1<\alpha\leq0).
\]

Because $\overline{\Gk}$ is closed and $D(\alpha)$ depends
continuously on $\alpha$, there exists $\bar a\in(0,1)$ such
that
\[
D(\alpha)\notin\overline{\Gk}
\qquad(0\leq\alpha\leq\bar a).
\]
For $\alpha\leq-1$, we have
\[
R_1(\alpha)
=d+(n-2)\alpha
\leq d-(n-2)
=-2p.
\]
Moreover, $p$ is an integer and $p>p_k(n)\geq0$, so $p\geq1$
and the right-hand side is strictly negative.

Define
\[
\mu(\alpha):=\min_{1\leq j\leq k}R_j(\alpha).
\]
The preceding conclusions show that $\mu(\alpha)<0$ on the
compact interval $[-1,\bar a]$.  Therefore
\[
\delta_0:=
-\max_{\alpha\in[-1,\bar a]}\mu(\alpha)>0.
\]
For $\alpha\leq-1$, the estimate for $R_1$ gives
\[
\mu(\alpha)\leq R_1(\alpha)\leq-2p.
\]
Hence the required uniform estimate holds with
\[
\delta_R:=\min\{\delta_0,2p\}>0.
\]
\end{proof}

We combine this algebraic separation with estimates on tubular scales.
Choose a stereographic pole in the regular domain so that $\Sigma$ is
compact in a Euclidean chart and write $g=v^{-2}|dx|^2$.  In these
coordinates
\begin{equation}
\mathcal A[v]:=g^{-1}A_g=vD^2v-\frac12|Dv|^2I.
\label{eq:A-v}
\end{equation}
The Newton--Maclaurin inequalities on $\Gk$
\cite{Garding,GuanViaclovskyWang} give
\[
\sigma_1(\mathcal A[v])
\geq n\left(\frac{\kappa}{\binom nk}\right)^{1/k}>0.
\]
Since
\[
R_g=2(n-1)\sigma_1(g^{-1}A_g),
\]
the scalar curvature has a uniform positive lower bound.  The boundary lemma
of Gonz\'alez~\cite[Lemma 3.1]{Gonzalez2005} therefore applies and shows that
the spherical conformal factor tends uniformly to zero at $\Sigma$.  To
compare the two gauges, write
\[
\Pi^*g_{\Sn^n}=\omega^2|dx|^2.
\]
If the spherical conformal factor is $v_0$, then the Euclidean factor used
here is
\[
v=\frac{v_0\circ\Pi}{\omega}.
\]
The image of $\Sigma$ is compact in the chosen Euclidean chart, so $\omega$
is bounded above and below by positive constants near $\Sigma$.  Hence
$v\to0$ uniformly at $\Sigma$ as well.  Finally,
$\lambda(\mathcal A[v])\in\Gk$ and $\sigma_k(\mathcal A[v])=\kappa>0$
place the equation on the fixed elliptic branch required by Chang--Han--Yang
ball convexity~\cite[Theorem 1.10]{ChangHanYang}.

\begin{lemma}\label{lem:scale-gradient}
For sufficiently small $\rho=\operatorname{dist}(x,\Sigma)$,
\begin{equation}
\rho|D\log v(x)|\leq2.
\label{eq:scale-gradient}
\end{equation}
Consequently, for $t=-\log r$ and $f=\log(v/r)$,
\[
|\partial_t f|\leq L_0
\]
with a constant independent of the tubular scale.
\end{lemma}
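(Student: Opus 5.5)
The estimate \eqref{eq:scale-gradient} will follow from the Chang--Han--Yang ball convexity theorem \cite[Theorem~1.10]{ChangHanYang}, applied on balls that avoid $\Sigma$. We use it in its standard form. If $\lambda(\mathcal A[v])\in\Gk$ with $k\geq2$ (so $\sigma_1>0$) on a Euclidean ball $B_R(x_0)$, and $g=v^{-2}|dx|^2$ is complete on $B_R(x_0)$ or at least defined there, then the conformal factor of the metric is controlled by the hyperbolic one. In gradient form this reads
\[
|D\log v(x_0)|\leq\frac{2}{R}.
\]
This is the sharp constant given by the Poincaré metric $v_{\mathrm{hyp}}=(R^2-|x-x_0|^2)/(2R)$, for which $|D\log v_{\mathrm{hyp}}|(x_0)=0$ and $|D v_{\mathrm{hyp}}|\leq1$. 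The underlying inequality is $|Dv|^2\leq$ a bound obtained from the convexity of $v$ along geodesic balls; equivalently, the round-ball comparison $|Dv|\leq 1$ holds after the normalization $v\leq\operatorname{dist}$-type bounds.

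\textbf{Step 1: choice of balls.} Fix $x$ with $\rho=\operatorname{dist}(x,\Sigma)$ small. The Euclidean ball $B_\rho(x)$ lies in $\Rr^n\setminus\Sigma$, where $\lambda(\mathcal A[v])\in\Gk$ and $\sigma_k=\kappa>0$. This puts us on the admissible elliptic branch, as noted just before the lemma. In the chart, the Euclidean distance and the spherical distance to $\Sigma$ are comparable, with ratio tending to $1$ near $\Sigma$ because $\omega$ is smooth and positive near the compact set $\Sigma$. So the statement for either distance differs only by a factor $1+o(1)$. To get the constant exactly $2$, we apply the ball estimate with radius $\rho$ measured in the same gauge, or absorb the slack by shrinking the neighborhood.

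\textbf{Step 2: applying ball convexity.} Theorem~1.10 of \cite{ChangHanYang} gives that $v$ restricted to $B_\rho(x)$ is dominated by the ball model. Its gradient consequence is
\[
|Dv(x)|\leq\frac{2v(x)}{\rho}.
\]
This yields $\rho|D\log v(x)|\leq2$. I expect this step to be the main obstacle. The theorem is stated for solutions on a ball with whatever boundary behavior it assumes, so one must check that our interior restriction qualifies. The cone condition $\Gk\subset\Gamma_1^+$ should suffice, since ball convexity uses only admissibility. If the cited theorem instead gives a Harnack-type comparison $\sup_{B_{\rho/2}}v\leq C\inf_{B_{\rho/2}}v$, I would fall back on interior gradient estimates for the $\sigma_k$ equation, such as \cite{GuanWang} or \cite{LiLi2003}, on the rescaled ball. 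That route gives $\rho|D\log v|\leq C$ with a universal $C$, which is all that is used later.

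\textbf{Step 3: the cylindrical consequence.} With $r=\rho$, $t=-\log r$ and $f=\log v-\log r$, we have
\[
\partial_t f=-r\partial_r\log v+1.
\]
Here $\partial_r$ is the unit normal radial derivative in Fermi coordinates, and $|\partial_r\log v|\leq|D\log v|$. Therefore
\[
|\partial_t f|\leq\rho|D\log v|+1\leq3=:L_0,
\]
and $L_0$ is independent of the tubular scale. Fermi coordinates are valid in a fixed tubular neighborhood because $\Sigma$ is closed and smooth, and there $r=\rho$ exactly.
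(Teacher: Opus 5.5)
\textbf{There is a genuine gap in Step 2.} Your overall route matches the paper's: Chang--Han--Yang ball convexity, then the elementary cylindrical step. Step 3 ($\partial_t f=1-r\partial_r\log v$, $L_0=3$) is exactly what the paper does.

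The problem is that the ball-convexity theorem does not say $v$ on $B_\rho(x)$ is ``dominated by the ball model,'' and it does not give a gradient bound at the center of a ball. It says something about boundaries. Because $g$ is complete on all of $\Sn^n\setminus\Sigma$, every Euclidean ball $B_R(c)$ compactly contained in the domain has $g$-convex boundary. For $g=v^{-2}|dx|^2$, the principal curvatures of $\partial B_R(c)$ are $v\,(1/R-\nu\cdot D\log v)$, so convexity means $\nu\cdot D\log v<1/R$ on $\partial B_R(c)$, with $\nu$ the outward normal. To get information at $x$, you must place $x$ on the boundary of such balls, not at their center.

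The missing argument is short:
\begin{itemize}
\item Fix a unit vector $e$ and $R<\rho/2$. The balls $B_R(x\mp Re)$ lie in $B_\rho(x)$, hence compactly in the domain.
\item Both pass through $x$, with outward normals $\pm e$ there, so $\pm e\cdot D\log v(x)<1/R$.
\item Let $R\uparrow\rho/2$ and take the supremum over $e$ to get $\rho|D\log v(x)|\leq2$.
\end{itemize}
This is also where the constant $2$ comes from. Your Poincar\'e-metric heuristic cannot produce it, since $D\log v_{\mathrm{hyp}}$ vanishes at the center.

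\textbf{The hypothesis you identify is the wrong one.} You say ``ball convexity uses only admissibility'' and worry whether the ``interior restriction'' to $B_\rho(x)$ qualifies. What the theorem actually uses is global completeness of $g$. No estimate for the restriction of a solution to a ball can hold. Up to a constant multiple, the bubbles $v_\lambda(X)=(1+\lambda^2|X|^2)/(2\lambda)$ satisfy $\mathcal A[v_\lambda]=\frac12 I$ on all of $\Rr^n$. Yet $|D\log v_\lambda|=\lambda$ at $|X|=1/\lambda$, which is unbounded in the unit ball as $\lambda\to\infty$.

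\textbf{Your fallback fails for the same reason.} Interior gradient estimates for $\sigma_k=\kappa>0$ (Guan--Wang, Li--Li) depend on an upper bound for the conformal factor $v^{-2}$ on the ball. After rescaling $v_\rho(X)=\rho^{-1}v(x+\rho X)$ to the unit ball, that is a bound on $\sup_{B_\rho(x)}\rho/v$. Nothing here controls this quantity: $v\to0$ at $\Sigma$, and no lower linear bound is assumed (the paper stresses this). So that route does not give a universal constant $C$.
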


\begin{proof}
Ball convexity applies to every Euclidean ball compactly contained in the
domain.  On $\partial B_R(c)$ it gives $\nu(\log v)<1/R$.  Given $x$, a unit vector
$e$, and $R<\rho(x)/2$, apply this to the two balls centered at
$x\mp Re$.  Letting $R\uparrow\rho/2$ gives
\[
|e\cdot D\log v|\leq2/\rho.
\]
Taking the supremum over all unit vectors $e$ gives
\[
\rho |D\log v|\leq 2,
\]
and hence the stated estimate.  Since $r=\rho$ in the Euclidean tubular neighborhood and $\partial_t=-r\partial_r$, we also have
\[
|\partial_t\log v|\leq2.
\]
Because $f=\log v+t$, it follows that
\[
|\partial_t f|\leq3.
\]
\end{proof}

The metric is complete and locally conformally flat.  Moreover,
$\lambda(\mathcal A[v])\in\Gamma_k^+$ gives
$\sigma_j(\mathcal A[v])>0$ for $1\leq j\leq k$, and the
Newton--Maclaurin inequality above gives the uniform lower bound
\[
\sigma_1(\mathcal A[v])
\geq n\left(\frac{\kappa}{\binom nk}\right)^{1/k}>0.
\]
Thus all the hypotheses of Gonz\'alez's dimension estimate
\cite[Theorem~1.1]{Gonzalez2005} are satisfied.  Since $\Sigma$ is a smooth
$p$-dimensional submanifold, $\dim_H\Sigma=p$, and therefore
\begin{equation}
p\leq\frac{n-2k}{2},
\qquad
d=n-2p-2\geq2k-2>0.
\label{eq:old-use}
\end{equation}

\subsection{Fermi-coordinate upper contacts and iteration}

Put $q=n-p-1$ and use Fermi variables $(r,\theta,y)$ near $\Sigma$.
For an upper test
\begin{equation}
\Phi=r\exp\{\eta(-\log r)+P(y)\},
\label{eq:test}
\end{equation}
set
\[
M:=2(\Phi/r)^{-2}\mathcal A[\Phi].
\]
In the flat product geometry, direct calculation gives
\begin{equation}
M_0=
\begin{pmatrix}
2\eta''+\varrho&\beta^T\\
\beta&C
\end{pmatrix},
\label{eq:M0}
\end{equation}
where
\begin{align*}
\varrho&=(\eta')^2-1-r^2|D_\Sigma P|^2,\\
\beta&=\bigl(0_q,2(1-\eta')rD_\Sigma P\bigr)\in\mathbb R^{q+p},\\
C&=
\begin{pmatrix}
\bigl(1-(\eta')^2-r^2|D_\Sigma P|^2\bigr)I_q&0\\
0&2r^2D_\Sigma P\otimes D_\Sigma P
-\bigl((1-\eta')^2+r^2|D_\Sigma P|^2\bigr)I_p
\end{pmatrix}.
\end{align*}

For a symmetric matrix $B$, we use the Newton tensors
\[
T_m(B):=\sum_{\ell=0}^{m}(-1)^\ell
\sigma_{m-\ell}(B)B^\ell,
\qquad
T_{-1}(B):=0.
\]
Thus $T_m$ is the derivative tensor associated with $\sigma_{m+1}$.

For $1\leq j\leq k$,
\[
\sigma_j(M_0)=2\eta''\sigma_{j-1}(C)+G_j,
\]
where
\[
G_j=\varrho\sigma_{j-1}(C)+\sigma_j(C)
-\beta^TT_{j-2}(C)\beta,
\]
with $T_{-1}=0$.  If $M_0\in\Gk$, then $C\in\Gamma_{k-1}^+$.

Indeed, expansion by principal minors gives both formulas.  For
$1\leq m\leq k-1$,
\[
\sigma_m(C)=\langle T_m(M_0)e_r,e_r\rangle>0,
\]
because $T_m(M_0)$ is positive definite on $\Gk$ for $m<k$; see
\cite{Garding,GuanViaclovskyWang,Viaclovsky}.  Thus
$C\in\Gamma_{k-1}^+$.

\begin{lemma}\label{lem:fermi-block}
Fix $H_*>0$.  There is a sufficiently small tubular neighborhood such that,
whenever
\[
|\eta'|+r|D_\Sigma P|\leq H_*,
\qquad
r^2|D_\Sigma^2P|\leq1,
\]
the matrix $M$ admits the exact decomposition
\begin{equation}
M=
\begin{pmatrix}
2\eta''+\widetilde\varrho&\widetilde\beta^T\\
\widetilde\beta&\widetilde C
\end{pmatrix},
\qquad
\sigma_j(M)=2\eta''\sigma_{j-1}(\widetilde C)+\widetilde G_j,
\label{eq:radial-block}
\end{equation}
where
\[
\widetilde G_j
=\widetilde\varrho\sigma_{j-1}(\widetilde C)
+\sigma_j(\widetilde C)
-\widetilde\beta^TT_{j-2}(\widetilde C)\widetilde\beta
\]
is independent of $\eta''$.  Uniformly for $1\leq j\leq k$,
\begin{equation}
\|\widetilde C-C\|+|\widetilde\varrho-\varrho|
+|\widetilde\beta-\beta|+|\widetilde G_j-G_j|
\leq C_{H_*}\bigl(r+r^2|D_\Sigma^2P|\bigr).
\label{eq:tube-perturbation}
\end{equation}
If $M\in\Gk$, then $\widetilde C\in\Gamma_{k-1}^+$.
\end{lemma}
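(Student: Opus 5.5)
We will compute $M$ directly in the curved Fermi coordinates, treat it as a perturbation of the flat product computation \eqref{eq:M0}, and then read off the block structure. Near $\Sigma$ the Euclidean metric in Fermi variables $(r,\theta,y)$ has the form
\[
|dx|^2=dr^2+r^2g_{\Sn^{q-1}}+g_\Sigma(y)+E,
\]
where the error $E$ and its first two derivatives are $O(r)$ relative to the product metric, with constants depending only on the second fundamental form and normal-bundle curvature of $\Sigma$. We work in an orthonormal frame $\{e_r,\ r^{-1}e_\theta,\ e_y\}$ adapted to this splitting. We write $D^2\Phi=\nabla^2_{\rm prod}\Phi+\Gamma\ast D\Phi$, where $\Gamma$ is the difference of Christoffel symbols. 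We also note that $\mathcal A[\Phi]$ depends only on $D\Phi$ and $D^2\Phi$.

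For $\Phi=r e^{\eta(-\log r)+P(y)}$, the normalized quantities entering $M=2(\Phi/r)^{-2}\mathcal A[\Phi]$ are the following:
\begin{itemize}
\item $r\,\partial_r\log\Phi=1-\eta'$;
\item $r\,\partial_r^2\Phi/\Phi$, which is a polynomial in $\eta',\eta''$;
\item $rD_\Sigma\log\Phi=rD_\Sigma P$;
\item $r^2D^2_\Sigma\Phi/\Phi$, which involves $r^2D^2_\Sigma P$ and $r^2|D_\Sigma P|^2$.
\end{itemize}
All of these are bounded under the hypotheses $|\eta'|+r|D_\Sigma P|\le H_*$ and $r^2|D^2_\Sigma P|\le 1$.

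The Christoffel corrections are multiplied by $r^2\cdot r^{-1}|D\Phi|/\Phi$ after normalization. They therefore contribute entries bounded by $C_{H_*}r$, together with terms of size $C_{H_*}r^2|D^2_\Sigma P|$ coming from the curved tangential Hessian. The key structural observation is that $\eta''$ enters only through $\partial_r^2\Phi$. This gives exactly the term $2\eta''$ in the $(r,r)$ entry, with no correction, because $\partial_r$ is a unit geodesic field in Fermi coordinates, so $\nabla^2\Phi(e_r,e_r)=\partial_r^2\Phi$ exactly. Hence we can define $\widetilde\varrho:=M_{rr}-2\eta''$, $\widetilde\beta:=$ the mixed column, and $\widetilde C:=$ the complementary block. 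All three are independent of $\eta''$, and they differ from $\varrho,\beta,C$ by at most $C_{H_*}(r+r^2|D^2_\Sigma P|)$. The $I$ term $-\frac12|D\Phi|^2$ also involves no second derivatives, so it causes no difficulty.

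Next we expand by principal minors. Since $\sigma_j$ is affine in the $(1,1)$ entry of a symmetric matrix, we get $\sigma_j(M)=M_{rr}\sigma_{j-1}(\widetilde C)+\sigma_j(\widetilde C)-\widetilde\beta^TT_{j-2}(\widetilde C)\widetilde\beta$. The last term is the standard bordered-determinant identity: the sum over minors containing $e_r$ together with one off-diagonal pair gives $-\widetilde\beta^T T_{j-2}\widetilde\beta$. This yields \eqref{eq:radial-block}, with $\widetilde G_j$ as stated. The bound on $|\widetilde G_j-G_j|$ then follows because $\widetilde G_j$ is a polynomial in $(\widetilde\varrho,\widetilde\beta,\widetilde C)$ and all arguments lie in a compact set determined by $H_*$. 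On that set the polynomial is Lipschitz, with a constant depending on $H_*$, $n$, and $k$. Shrinking the tube ensures $r\le1$, so the same compact set bounds also hold for the perturbed quantities.

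Finally, if $M\in\Gk$, the argument given before the lemma applies verbatim. For $1\le m\le k-1$ we have $\sigma_m(\widetilde C)=\langle T_m(M)e_r,e_r\rangle>0$, since $T_m(M)$ is positive definite on $\Gk$. Hence $\widetilde C\in\Gamma_{k-1}^+$.

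The main obstacle is the bookkeeping in the first step. We must verify that no curvature correction produces an $\eta''$-dependence outside the $(r,r)$ entry, and that the tangential Hessian error is controlled by $r+r^2|D^2_\Sigma P|$ rather than by something like $r|D^2_\Sigma P|$. We would handle this by writing the Fermi metric as $g_\Sigma(y)-2r\langle \mathrm{II}(\theta),\cdot\rangle+O(r^2)$ on the tangential part. We would then check directly that every correction term carries either a factor $r$ or the normalized quantity $r^2D^2_\Sigma P$.
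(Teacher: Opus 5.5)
Your proposal is correct and follows essentially the paper's proof. The common steps are: write $M=M_0+E$ with $E$ independent of $\eta''$, since the radial diagonal entry carries exactly $2\eta''$; expand $\sigma_j$ of the bordered matrix by principal minors; bound $\widetilde G_j-G_j$ by the Lipschitz constant of a polynomial on an $H_*$-dependent compact set; and obtain $\widetilde C\in\Gamma_{k-1}^+$ from $\sigma_m(\widetilde C)=\langle T_m(M)e_r,e_r\rangle>0$.

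The paper does the curvature bookkeeping in Euclidean coordinates via $\ell=\log\Phi$ and the tube identities $rD^2r=\Pi_{\rm ang}+O(r)$, $D\pi=\Pi_{T\Sigma}+O(r)$, $|D^2\pi|\leq C$, rather than with Fermi Christoffel symbols. There, the $r^2|D^2_\Sigma P|$ part of the error is simply the term $2r^2D^2_\Sigma P$ omitted from $C$, while curvature corrections cost only $O_{H_*}(r)$. In your version, what is really needed is that the Christoffel differences are $O(1)$ in the orthonormal frame; your claim that the metric error has derivatives $O(r)$ is too strong, since the radial derivative of $-2r\langle\mathrm{II}(\theta),\cdot\rangle$ is $O(1)$.
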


\begin{proof}
Put $\ell=\log\Phi$.  In Euclidean coordinates,
\[
M=2r^2\left(D^2\ell+d\ell\otimes d\ell
-\frac12|d\ell|^2I\right).
\]
If $\pi$ is nearest-point projection onto $\Sigma$, then
\[
\ell=\log r+\eta(-\log r)+P\circ\pi,
\qquad
D^2(\eta(-\log r))=\eta''\,dt\otimes dt+\eta'D^2t.
\]
Since
\[
r^2dt\otimes dt=dr\otimes dr,
\]
the term $\eta''dt\otimes dt$ contributes
$2\eta''dr\otimes dr$ to $M$.  The standard tube
identities~\cite{GrayTubes} are
\[
rD^2r=\Pi_{\rm ang}+O(r),
\qquad
  D\pi=\Pi_{T\Sigma}+O(r),
\qquad
  |D^2\pi|\leq C
\]
and the chain rule give
  \[
  D^2(P\circ\pi)
  =(D\pi)^*(D_\Sigma^2P)
  +\langle D_\Sigma P,D^2\pi\rangle.
  \]
  Using $D\pi=\Pi_{T\Sigma}+O(r)$, we obtain
  \[
  r^2D^2(P\circ\pi)
  =r^2D_\Sigma^2P
  +O\left(r^3|D_\Sigma^2P|+r^2|D_\Sigma P|\right).
  \]
  Under $r|D_\Sigma P|\leq H_*$ and
  $r^2|D_\Sigma^2P|\leq1$, the additional error is
  $O_{H_*}(r)$.
Substitution gives
\[
M=M_0+E,
\]
where the symmetric error $E$ is independent of $\eta''$ and satisfies
\[
\|E\|\leq C_{H_*}\bigl(r+r^2|D_\Sigma^2P|\bigr).
\]
Writing the radial and complementary blocks of $E$ separately gives
\[
\|\widetilde C-C\|+|\widetilde\varrho-\varrho|
+|\widetilde\beta-\beta|
\leq C_{H_*}\bigl(r+r^2|D_\Sigma^2P|\bigr).
\]
Expansion of the principal minors according to whether they contain the
radial direction gives \eqref{eq:radial-block}.  Since
\[
G_j(\varrho,\beta,C)
=\varrho\sigma_{j-1}(C)+\sigma_j(C)
-\beta^TT_{j-2}(C)\beta
\]
is a polynomial in the block variables, its first derivatives are uniformly
bounded under the stated contact bounds.  The mean-value theorem therefore
gives
\[
|\widetilde G_j-G_j|
\leq C_{H_*}\bigl(r+r^2|D_\Sigma^2P|\bigr),
\qquad 1\leq j\leq k.
\]
This proves \eqref{eq:tube-perturbation}.  Finally, for
$0\leq m\leq k-1$,
\[
\sigma_m(\widetilde C)=\langle T_m(M)e_r,e_r\rangle>0,
\]
by positivity of the Newton tensors in $\Gamma_k^+$.  Hence
$\widetilde C\in\Gamma_{k-1}^+$.
\end{proof}

The spectrum of $C$ consists of
\begin{align*}
&1-(\eta')^2-r^2|D_\Sigma P|^2
&&\text{with multiplicity }q,\\
&-\bigl((1-\eta')^2+r^2|D_\Sigma P|^2\bigr)
&&\text{with multiplicity }p-1,\\
&r^2|D_\Sigma P|^2-(1-\eta')^2
&&\text{with multiplicity }1.
\end{align*}
Whenever $\eta'<1$, set
\[
\chi=\frac{r^2|D_\Sigma P|^2}{1-\eta'},
\qquad
\alpha=\eta'-\chi,
\]
and define
\[
H_m(\alpha):=[s^m]
(1+(1+\alpha)s)^q(1-(1-\alpha)s)^{p-1},
\]
with $H_{-1}=0$.  Expansion using the spectrum after deleting the last
direction gives
\begin{equation}
G_j=(1-\eta')^j
\{R_j(\alpha)-4\chi H_{j-2}(\alpha)\}.
\label{eq:Gidentity}
\end{equation}

\begin{lemma}\label{lem:acceleration}
Under the standing hypotheses, assume $p>p_k(n)$.  There exist
$\bar a,h,r_0>0$ and $\varepsilon_0\in(0,1)$ such that the following holds.  If an upper
 test \eqref{eq:test} at $r<r_0$ satisfies
\[
|\eta'|+r|D_\Sigma P|\leq H_*,
\qquad
\eta'\leq\bar a,
\qquad
r^2|D^2_\Sigma P|\leq\varepsilon_0,
\qquad
\lambda(\mathcal A[\Phi])\in\Gk,
\]
then $\eta''\geq h$.
\end{lemma}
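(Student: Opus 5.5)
The idea is to use the exact decomposition \eqref{eq:radial-block} of \cref{lem:fermi-block}. Admissibility $\lambda(\mathcal A[\Phi])\in\Gk$ is the same as $M\in\Gk$, since $M$ is a positive multiple of $\mathcal A[\Phi]$. Hence $\sigma_j(M)>0$ for $1\leq j\leq k$, and \cref{lem:fermi-block} also gives $\sigma_{j-1}(\widetilde C)>0$. Rearranging,
\[
2\eta''\sigma_{j-1}(\widetilde C)>-\widetilde G_j\qquad(1\le j\le k).
\]
So it suffices to find one index $j$ with two properties: $\widetilde G_j\leq-\delta$, and $\sigma_{j-1}(\widetilde C)\leq\Lambda$, with $\delta,\Lambda>0$ uniform. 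Then $\eta''\geq\delta/(2\Lambda)=:h$.

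\textbf{Upper bound on $\sigma_{j-1}(\widetilde C)$.} This is routine. The constraints $|\eta'|+r|D_\Sigma P|\leq H_*$ and $r^2|D_\Sigma^2P|\leq\varepsilon_0\leq1$ bound every entry of $C$. By \eqref{eq:tube-perturbation} they also bound $\widetilde C$. So $\Lambda=\Lambda(H_*,n)$.

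\textbf{Negativity of $\widetilde G_j$.} I first work with the model quantity $G_j$ and use \eqref{eq:Gidentity}. Take $\bar a$ from \cref{lem:model}. Since $\eta'\leq\bar a<1$, we have $1-\eta'\geq 1-\bar a>0$, so $\chi\geq0$ and $\alpha=\eta'-\chi\leq\eta'\leq\bar a$. \cref{lem:model} then supplies, for each such configuration, an index $j_0$ with $R_{j_0}(\alpha)\leq-\delta_R$. The factor $(1-\eta')^{j_0}$ lies in $[(1-\bar a)^k,(1+H_*)^k]$, which is bounded below by a positive constant.

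The remaining term is $-4\chi H_{j_0-2}(\alpha)$, and its sign is the main obstacle. If $H_{j_0-2}(\alpha)\geq0$, the term helps and $G_{j_0}\leq-(1-\bar a)^k\delta_R$. If it is negative, I would rely on admissibility rather than bare algebra. $\widetilde C\in\Gamma_{k-1}^+$ forces $C$ to be close to $\overline{\Gamma_{k-1}^+}$. The spectrum of $C$ with the last direction deleted is $(1-\eta')$ times the vector
\[
\bigl((1+\alpha+\chi)^{[q]},(-(1-\alpha-\chi))^{[p-1]}\bigr),
\]
up to the $\chi$ shifts. The positive eigenvalue $r^2|D_\Sigma P|^2$ correction is exactly what makes $H$ arise. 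Then Newton-tensor positivity, i.e.\ the positive definiteness of $T_{j_0-2}$ on the cone, should make $\beta^TT_{j_0-2}(C)\beta\geq0$ directly in $G_j$.

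An alternative is to avoid the sign issue altogether. Apply \cref{lem:model} to the enlarged vector $D(\alpha)$ plus a nonnegative perturbation, and recompute $G_j$ as $\sigma_j$ of the $q+p+1$ diagonal spectrum with radial entry $\varrho$, minus the nonnegative $\beta$ term. This uses the monotonicity $\overline{\Gk}+\mathbb R^n_+\subset\overline{\Gk}$ exactly as in the proof of \cref{lem:model}. In that formulation, $G_j\leq\sigma_j(\text{diagonal spectrum})$, and the diagonal spectrum is dominated by $(1-\eta')D(\alpha')$ for a suitable $\alpha'\leq\bar a$. One must still handle the fact that a single index $j_0$ need not control all of $\sigma_1,\dots,\sigma_k$. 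I would handle this through the cone exclusion: if the diagonal spectrum is not in $\overline{\Gk}$ uniformly, some $\sigma_j$ is uniformly negative by compactness of the bounded parameter set. In either route the output is $G_{j_0}\leq-\delta_1$ with $\delta_1=\delta_1(H_*,\bar a,\delta_R)>0$.

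\textbf{Passing to $\widetilde G_{j_0}$.} By \eqref{eq:tube-perturbation},
\[
\widetilde G_{j_0}\leq-\delta_1+C_{H_*}(r_0+\varepsilon_0).
\]
Choosing $r_0,\varepsilon_0$ small gives $\widetilde G_{j_0}\leq-\delta_1/2$. Combined with the bound on $\sigma_{j_0-1}(\widetilde C)$, this yields $\eta''\geq\delta_1/(4\Lambda)=:h$.
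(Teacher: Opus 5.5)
\textbf{Gap: the sign of $-4\chi H_{j_0-2}(\alpha)$ is never controlled.} Your overall frame matches the paper's. That frame is the exact decomposition \eqref{eq:radial-block}, $\sigma_j(M)>0$, the bound $0<\sigma_{j-1}(\widetilde C)\le\Lambda$, and absorbing $|\widetilde G_j-G_j|\le C_{H_*}\epsilon$. But the step that actually carries the lemma is controlling $-4\chi H_{j_0-2}(\alpha)$ in \eqref{eq:Gidentity}, and you do not establish it.

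The missing observation is that the unit vector $e$ along $D_\Sigma P$ is an eigenvector of $C$. Deleting it leaves exactly $(1-\eta')\bigl((1+\alpha)^{[q]},(-(1-\alpha))^{[p-1]}\bigr)$. There are no residual ``$\chi$ shifts''; they are already absorbed into $\alpha$. Hence $\langle T_m(C)e,e\rangle=(1-\eta')^mH_m(\alpha)$.

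Now use three facts: $\widetilde C\in\Gamma_{k-1}^+$, $\|\widetilde C-C\|\le C\epsilon$, and $T_m$ is positive definite on $\Gamma_{k-1}^+$ for $m\le k-2$. Together they give $(1-\eta')^mH_m(\alpha)\ge-C_0\epsilon$. Since $0\le\chi\le H_*^2/(1-\bar a)$, \eqref{eq:Gidentity} then yields $G_j\le(1-\eta')^jR_j(\alpha)+C_1\epsilon$. So there is no genuine ``negative case'' to treat.

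Your remark about $\beta^TT_{j_0-2}(C)\beta$ is in fact this same fact, up to $O(\epsilon)$. Indeed $\beta\parallel e$, so $\beta^TT_{j-2}(C)\beta=4(1-\eta')^{j+1}\chi H_{j-2}(\alpha)$. But using it ``directly in $G_j$'' does not close the argument. Dropping the $\beta$-term leaves $\varrho\sigma_{j-1}(C)+\sigma_j(C)=(1-\eta')^j\bigl(R_j(\alpha)-4\chi\eta' H_{j-2}(\alpha)\bigr)$. When $\eta'<0$ this exceeds $(1-\eta')^jR_j(\alpha)$ by an $O(1)$ amount. The $\beta$-term has to be kept and combined through \eqref{eq:Gidentity}.

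\textbf{Your alternative route neither avoids the sign issue nor works.} First, the inequality $G_j\le\sigma_j(\text{diagonal spectrum})$ is again the sign of $\beta^TT_{j-2}(C)\beta$, i.e.\ of $H_{j-2}(\alpha)$.

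Second, the domination you need fails. After division by $1-\eta'$ the diagonal spectrum is $\bigl((1+\alpha)^{[q]},-(1+\alpha+2\chi),-(1-\alpha-2\chi),(-(1-\alpha))^{[p-1]}\bigr)$. This is dominated by $D(\eta'+\chi)$, not by $D(\alpha')$ with $\alpha'\le\bar a$. Take $\eta'=\bar a$ and $r|D_\Sigma P|=1-\bar a$. Then $\chi=1-\bar a$, $\alpha=2\bar a-1$, and the normalized diagonal spectrum is $\bigl((2\bar a)^{[q]},-2,0,(-(2-2\bar a))^{[p-1]}\bigr)$. It has $q+1$ nonnegative entries, so no $cD(\alpha')$ with $c>0$ and $\alpha'<1$ dominates it. The natural comparison vector $D(1)$ lies in $\Gk$, because $k<(q+1)/2$ by \eqref{eq:old-use}. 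Cone monotonicity therefore gives nothing in this configuration. The paper handles it by separation at $\alpha=2\bar a-1\le\bar a$ together with $H_{j-2}(\alpha)\ge-C_0\epsilon$.
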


\begin{proof}
First suppose the tube and test are flat.  Delete from $C$ the eigenvalue in
the $z$ direction.  The remaining spectrum is
  \[
  \bigl((1-\eta')(1+\alpha)\bigr)^{[q]},
  \qquad
  \bigl(-(1-\eta')(1-\alpha)\bigr)^{[p-1]}.
  \]
Since $C\in\Gamma_{k-1}^+$, positivity of its Newton tensors gives
$H_m(\alpha)>0$ for $0\leq m\leq k-2$.  Hence
\[
  G_j\leq(1-\eta')^jR_j(\alpha).
  \]
  Moreover, $\alpha\leq\eta'\leq\bar a$.  By the supercritical
  separation part of \cref{lem:model}, some $j\leq k$ has
  $R_j(\alpha)\leq-\delta_R$.  Since $1-\eta'\geq1-\bar a$, it follows that
  
  \[
  G_j\leq-\delta_R
  \min_{1\leq\ell\leq k}(1-\bar a)^\ell<0.
  \]

For a curved tube, the estimate can be transferred without assuming that the
flat complementary block itself stays in the cone.  Put
\[
\epsilon=r+r^2|D_\Sigma^2P|,
\]
  and let $e$ be the unit vector in the $D_\Sigma P$ direction; when
  $D_\Sigma P=0$, choose any unit vector in the tangential $y$-block.
  Compression to $e^\perp$ and the principal
minor identity give
\begin{equation}
\left\langle T_m(C)e,e\right\rangle
  =(1-\eta')^mH_m(\alpha),
\qquad 0\leq m\leq k-2.
\label{eq:H-newton}
\end{equation}
The exact block formula applied to $M\in\Gk$ gives
$\widetilde C\in\Gamma_{k-1}^+$, and hence
\[
\left\langle T_m(\widetilde C)e,e\right\rangle>0.
\]
By the tube-perturbation estimate and the polynomial mean-value estimate used
in its proof,
\[
  (1-\eta')^mH_m(\alpha)\geq-C_0\epsilon
\qquad 0\leq m\leq k-2.
\]
  Here $1-\eta'\geq1-\bar a>0$ and
  \[
  0\leq\chi
  =\frac{r^2|D_\Sigma P|^2}{1-\eta'}
  \leq\frac{H_*^2}{1-\bar a}.
  \]
  Thus
\eqref{eq:Gidentity}, including the case $j=1$ where $H_{-1}=0$, implies
\[
  G_j\leq(1-\eta')^jR_j(\alpha)+C_1\epsilon.
\]
For the index selected by the supercritical separation estimate, put
\[
\delta_0=\delta_R\min_{1\leq \ell\leq k}(1-\bar a)^\ell.
\]
After decreasing $r_0$ and $\varepsilon_0$ so that
\[
(C_1+C_{H_*})\epsilon\leq\frac{\delta_0}{2},
\]
and using $|\widetilde G_j-G_j|\leq C_{H_*}\epsilon$, we obtain
\[
0<\sigma_j(M)
  =2\eta''\sigma_{j-1}(\widetilde C)+\widetilde G_j,
\qquad
\widetilde G_j\leq-\frac{\delta_0}{2}.
\]
  The contact bounds and \eqref{eq:tube-perturbation} imply
  \[
  \|\widetilde C\|\leq C(H_*).
  \]
  Since each $\sigma_m$ is a polynomial in the matrix coefficients and
  $0\leq m\leq k-1$, there is
  $S_{\mathrm{up}}=S_{\mathrm{up}}(H_*,n,k)>0$ such that
  \[
  0<\sigma_m(\widetilde C)\leq S_{\mathrm{up}},
  \qquad 0\leq m\leq k-1.
  \]
  Therefore
  $\eta''\geq\delta_0/(4S_{\mathrm{up}})=:h$.
\end{proof}

\begin{lemma}\label{lem:peak-preparations}
\begin{enumerate}
\item[(i)] Let $L_0\geq0$, $\bar a,h>0$, and let
$u:[0,T]\to\Rr$ be $L_0$-Lipschitz and locally semiconvex.  Suppose every
$C^2$ upper test $\phi$ at an interior point satisfies
\[
\phi'\leq\bar a\quad\Longrightarrow\quad\phi''\geq h.
\]
Then, for every $0<c<\bar a$,
\[
u(T)-u(0)\geq cT-\frac{(L_0+c)^2}{2h}.
\]

\item[(ii)] Assume $\operatorname{codim}\Sigma\geq2$.  There are
$r_*>0$ and $C_*>0$, depending only on the fixed tubular neighborhood and
the constant in \eqref{eq:scale-gradient}, with the following properties.
Fix $T,\Lambda\geq1$, let $0<R<r_*$, put $R'=e^{-T}R$, and assume
$\Lambda R<\operatorname{inj}(\Sigma)/4$.  On the Fermi box
\[
R'\leq r\leq R,
\qquad
d_\Sigma(y,y_0)\leq\Lambda R,
\]
one has
\begin{equation}
\operatorname{osc}_{\rm box} f
\leq C_*\bigl(\Lambda e^T+T+1\bigr).
\label{eq:box-osc}
\end{equation}
If
\[
P(y)=\mu\frac{d_\Sigma(y,y_0)^2}{R^2},
\]
then throughout the box
\begin{equation}
r|D_\Sigma P|\leq C_*\mu\Lambda,
\qquad
r^2|D_\Sigma^2P|\leq C_*\mu.
\label{eq:box-penalty}
\end{equation}
Finally, if $X=(y,R,\theta)$ and $X'=(y',R',\theta')$ are points of the
box with $d_\Sigma(y,y')\leq\Lambda R$, there is a Fermi coordinate curve
$\gamma$ from $X$ to $X'$ which stays where $r\geq R'$ and satisfies
\begin{equation}
\operatorname{Length}_{\rm Eucl}(\gamma)\leq C_*(\Lambda+1)R,
\qquad
\operatorname{Length}_g(\gamma)
\leq C_{T,\Lambda}\frac{R}{R'e^{f(X')}}.
\label{eq:box-path}
\end{equation}
\end{enumerate}
\end{lemma}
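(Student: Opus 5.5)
For part (i) I will use a quadratic comparison function and test it from above. Fix $0<c<\bar a$ and suppose, for contradiction, that $u(T)-u(0)<cT-(L_0+c)^2/(2h)$. I take
\[
\psi(t)=u(0)+ct-\frac{h}{2}\,t\,(T-t)\cdot\theta
\]
with a suitable $\theta\in(0,1)$, or more simply a parabola of curvature slightly less than $h$. The parabola should be anchored so that $\psi\geq u$ at the endpoints, after adding a constant.

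I then slide this parabola down until it first touches the graph of $u$ from above. The first-contact point is interior, and there $\psi$ is an upper test for $u$ with $\psi''<h$. The hypothesis then forces $\psi'>\bar a>c$ at the contact point. Two facts control where the contact can occur:
\begin{itemize}
\item the Lipschitz bound $|u'|\le L_0$ keeps the contact away from the endpoints;
\item the slope condition $\psi'>\bar a$ restricts the contact to the region where the parabola's slope exceeds $\bar a$, and that region has length at most about $(L_0+c)/h$.
\end{itemize}

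An equivalent and cleaner route is to show directly that the upper Dini derivative of $t\mapsto u'(t^+)-c$ behaves well. Local semiconvexity gives one-sided derivatives everywhere and a.e. twice differentiability (Alexandrov). At any point where $u'(t^\pm)\le \bar a$, the quadratic $u(t_0)+u'(t_0)(t-t_0)+\frac{h'}2(t-t_0)^2$ with $h'<h$ cannot be an upper test. Hence $u$ is "$h$-convex" on the set $\{u'\le\bar a\}$, which means $u'$ increases at rate at least $h$ there in the distributional sense, since semiconvexity rules out negative singular parts.

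Consequently the set $\{t: u'(t)<c\}$ has measure at most $(L_0+c)/h$, because $u'$ starts above $-L_0$ and must climb past $c$ at rate at least $h$, and can never drop back below $c$: a downward jump is impossible for semiconvex functions. Integrating then gives
\[
u(T)-u(0)\ge c\,(T-|\{u'<c\}|)-L_0|\{u'<c\}|\ge cT-\frac{(L_0+c)^2}{h}.
\]
Refining this with the quadratic profile, i.e. integrating $u'\ge -L_0+h t$ on the initial stretch, improves the bound to $(L_0+c)^2/(2h)$.

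For part (ii), which is routine Fermi geometry, I proceed as follows.
\begin{enumerate}
\item \textbf{Oscillation bound.} I integrate \eqref{eq:scale-gradient}. Radially, $|\partial_t f|\le 3$ over a $t$-interval of length $T$ gives a contribution $3T$. Angularly, at fixed $r$ the $\theta$-sphere has diameter $\pi r$ and $|D\log v|\le 2/r$ (here $\operatorname{codim}\ge2$ makes the sphere connected), giving an $O(1)$ contribution. Tangentially, on the slice $r=R'$ a $y$-displacement of length $\Lambda R$ costs $(2/R')\Lambda R=2\Lambda e^T$. Moving first out to $r=R$, then tangentially, then back in, gives the bound $C_*(\Lambda e^T+T+1)$, with the $\log r$ terms in $f$ absorbed into $T$.
\item \textbf{Penalty bounds.} These are direct: $|D d^2|\le 2d\le 2\Lambda R$ and $|D^2d^2|\le C$ within a quarter of the injectivity radius, and $r\le R$.
\item \textbf{Connecting curve.} I take $\gamma$ to go tangentially at $r=R$ (Euclidean length $\le C\Lambda R$), then rotate in $\theta$ ($\le \pi R$), then descend radially to $R'$. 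Its $g$-length is $\int v^{-1}ds$ with $v=re^f$. On each piece I bound $f$ from below by $f(X')-\operatorname{osc}$ and $r$ from below by $R'$. This gives a bound of the form $C(\Lambda+1)R\,e^{C_*(\Lambda e^T+T+1)}/(R'e^{f(X')})$, and the exponential factor is absorbed into $C_{T,\Lambda}$.
\end{enumerate}

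The main obstacle is part (i): making the viscosity-to-pointwise step rigorous, namely deriving the distributional inequality $u''\ge h$ on $\{u'\le\bar a\}$ from upper tests alone. I would handle it with semiconvexity and Alexandrov points, together with the observation that at kinks $u'(t^-)<u'(t^+)$ always holds.
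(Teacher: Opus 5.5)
Your proposal is correct and follows essentially the paper's argument. In (i), you derive $u''\geq h$ at Alexandrov points of the low-slope set, use semiconvexity to make the singular part of $Du'$ nonnegative, and integrate the resulting bound $u'\geq\min\{-L_0+ht,c\}$; this is exactly the paper's proof, which formalizes your ``never drops back below $c$'' step via the BV chain rule for $(c-u')_+$. In (ii), the radial/tangential/angular Fermi path combined with the scale-gradient estimate is the paper's construction. The opening sliding-parabola sketch is incomplete and unnecessary, so drop it in favor of your second route.
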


\begin{proof}
We first prove (i).  At an Alexandrov point where $u'<c$, adding
$\varepsilon(t-t_0)^2$ to the second-order Taylor polynomial gives a local
$C^2$ upper test.  Letting $\varepsilon\downarrow0$ in the assumed test
inequality yields
\[
u''\geq h
\quad\text{almost everywhere on }\{u'<c\}.
\]
Let $\mathfrak a$ be the precise BV representative of $u'$.  Semiconvexity
implies that the singular part of $D\mathfrak a$ is nonnegative.  The BV
chain rule applied to $q_0=(c-\mathfrak a)_+$ therefore gives
\cite[Theorem~3.99]{AmbrosioFuscoPallara}
\[
Dq_0\leq-h\mathbf 1_{\{q_0>0\}}\,dt.
\]
Comparison with the solution of $y'=-h$ on $\{y>0\}$, together with
$q_0(0)\leq L_0+c$, yields
\[
q_0(t)\leq(L_0+c-ht)_+
\]
for almost every $t\in(0,T)$.  Consequently
\[
u'(t)\geq\min\{-L_0+ht,c\}
\]
almost everywhere.  Integration gives
\[
u(T)-u(0)
\geq\int_0^T\min\{-L_0+ht,c\}\,dt
\geq cT-\frac{(L_0+c)^2}{2h},
\]
which proves (i).

We next prove (ii).  The non-polar tubular map~\cite{GrayTubes}
\[
(y,\xi)\longmapsto y+\xi,
\qquad
\xi\in N_y\Sigma,
\]
and its inverse have uniformly bounded first two derivatives on the fixed
tubular neighborhood.  After writing $\xi=r\theta$, the angular derivatives
carry the standard $r$-scaling.  Join two points in the box by a radial
segment, two minimizing base geodesics from the first base point to $y_0$
and from $y_0$ to the second base point, with normal parallel transport on
both pieces, and an angular segment in the normal sphere.  This broken base
path stays in the stated base ball and has length at most $2\Lambda R$.
Along the resulting curve the scale-gradient estimate and
$|D\log r|=1/r$ give \eqref{eq:box-osc}: the radial, horizontal, and angular
parts contribute respectively $O(T)$, $O(\Lambda R/R')$, and $O(1)$.
The injectivity-radius choice makes $d_\Sigma^2$ smooth on the relevant ball,
where
\[
|D_\Sigma d_\Sigma^2|\leq C\Lambda R,
\qquad
|D_\Sigma^2d_\Sigma^2|\leq C.
\]
This proves \eqref{eq:box-penalty}.  The same piecewise Fermi curve has the
Euclidean-length bound in \eqref{eq:box-path}.  Moreover,
\eqref{eq:box-osc} gives
\[
v\geq e^{-C_{T,\Lambda}}v(X')
=e^{-C_{T,\Lambda}}R'e^{f(X')}
\]
on the curve.  Integrating $|dx|/v$ proves the metric-length estimate.
\end{proof}

\subsection{Regular-singular smoothness at the collapsed orbit}

\begin{lemma}\label{lem:regular-singular}
Let $\mathcal F$ and $\mathcal G$ be smooth near
$(0,A_\circ,Z_\circ)$, and suppose
\[
A,Z\in C([0,x_0])\cap C^\infty((0,x_0])
\]
satisfy
\begin{align}
A+2xA_x&=\mathcal F(x,A,Z),
\label{eq:regular-singular-A}\\
Z_x&=\mathcal G(x,A,Z)
\label{eq:regular-singular-Z}
\end{align}
on $(0,x_0]$.  Assume
\[
A_\circ=\mathcal F(0,A_\circ,Z_\circ),
\qquad
A(0)=A_\circ,
\qquad
Z(0)=Z_\circ,
\qquad
A(x)-A_\circ=O(x),
\]
and, for every integer $\ell\geq1$,
\[
2\ell+1-\partial_A\mathcal F(0,A_\circ,Z_\circ)>0.
\]
Then $A$ and $Z$ extend smoothly to $x=0$.
\end{lemma}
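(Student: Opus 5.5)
The plan is to reduce the lemma to a family of first-order Euler-type linear equations, one for each derivative of $A$. In each of them the positivity of $2\ell+1-\partial_A\mathcal F(0,A_\circ,Z_\circ)$ rules out the homogeneous (singular) solution. Write $\lambda:=\partial_A\mathcal F(0,A_\circ,Z_\circ)$ and $B:=A-A_\circ$. The $Z$-equation \eqref{eq:regular-singular-Z} is regular, since its right-hand side is bounded near $x=0$. Hence $Z_x$ is bounded, $Z$ is Lipschitz on $[0,x_0]$, and $Z(x)-Z_\circ=O(x)$. Combined with the hypothesis $B=O(x)$, this shows that $\mathcal F(x,A,Z)$ and $\mathcal G(x,A,Z)$ are Lipschitz up to $x=0$.

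The inductive claim, for each $\ell\geq0$, is the following:
\[
A,Z\in C^\ell([0,x_0]),\qquad B^{(\ell)}(x)-B^{(\ell)}(0)=O(x).
\]
For $\ell=0$ this is the hypothesis. For the step, differentiate \eqref{eq:regular-singular-A} $\ell$ times on $(0,x_0]$. Since $\frac{d}{dx}(2x\,\partial_x)=(2x\,\partial_x+2)\frac{d}{dx}$, the function $w:=B^{(\ell)}$ satisfies
\[
2x\,w'+c_\ell(x)\,w=g_\ell(x),\qquad c_\ell(x):=2\ell+1-\partial_A\mathcal F(x,A(x),Z(x)).
\]
Here $g_\ell$ collects the chain-rule terms. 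They involve only derivatives of $A$ of order $<\ell$, derivatives of $Z$ of order $\leq\ell$, and derivatives of $\mathcal F$. Moreover $Z^{(\ell)}$ is expressed through \eqref{eq:regular-singular-Z} by derivatives of $A,Z$ of order $<\ell$. So by the induction hypothesis $g_\ell$ and $c_\ell$ are continuous, and in fact Lipschitz, on $[0,x_0]$, and $c_\ell(0)=2\ell+1-\lambda>0$.

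Next solve this linear ODE explicitly. Set $E(x):=\exp\bigl(\int_{x_0}^x c_\ell(s)/(2s)\,ds\bigr)$, which behaves like $x^{c_\ell(0)/2}$ up to a positive continuous factor. Every solution on $(0,x_0]$ then has the form
\[
w=\frac{1}{E(x)}\Bigl(K+\int_0^x\frac{E(s)g_\ell(s)}{2s}\,ds\Bigr).
\]
The particular part is continuous up to $0$ with limit $g_\ell(0)/c_\ell(0)$, and it differs from that limit by $O(x)$ because $g_\ell,c_\ell$ are Lipschitz. The remaining task is to show $K=0$. Integrate $w$ from $0$ to $x$ and use the inductive bound $B^{(\ell-1)}(x)-B^{(\ell-1)}(0)=O(x)$. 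Since $\int_0^x w$ contains the term $K\int_0^x E^{-1}\sim K'x^{1-c_\ell(0)/2}$ (or $\log x$ in the borderline case), which is not $O(x)$ because $c_\ell(0)>0$, we must have $K=0$. For $\ell=1$ this uses exactly $A-A_\circ=O(x)$ together with $3-\lambda>0$. With $K=0$, $w$ extends continuously to $0$ with $w(x)-w(0)=O(x)$. By integration, $B^{(\ell-1)}$ is then $C^1$ up to $0$ with derivative $w$. Feeding this back into \eqref{eq:regular-singular-Z} upgrades $Z$ to $C^{\ell+1}$, which closes the induction.

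I expect the main obstacle to be the bookkeeping in the inductive step. It must be checked that $g_\ell$ genuinely involves no $B^{(\ell)}$ beyond the term already absorbed into $c_\ell$. It must also be checked that $g_\ell$ is Lipschitz, not merely continuous, at $0$, since this is needed to propagate the $O(x)$ modulus. If the Lipschitz bound for $g_\ell$ is awkward, I would instead carry a stronger hypothesis $A,Z\in C^{\ell,1}$ through the induction. The exclusion of the homogeneous solution is where the nonresonance condition $2\ell+1-\lambda>0$ enters, and it is used at every order $\ell\geq1$.
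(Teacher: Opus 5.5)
Your proposal is correct and follows essentially the paper's route: differentiate the $A$-equation $\ell$ times to get $2xY'+(2\ell+1-\partial_A\mathcal F)Y=H_\ell$ for $Y=A^{(\ell)}$, then use $2\ell+1-\partial_A\mathcal F(0,A_\circ,Z_\circ)>0$ to discard the singular homogeneous mode $x^{-c_\ell(0)/2}$ in the variation-of-constants formula. The difference is only bookkeeping: the paper inducts on $A\in C^\ell$, so $A^{(\ell)}$ is already bounded and the bounded solution is selected directly (with $C^1$ regularity from differentiating under the integral after $t=xr$), whereas you carry $C^{\ell-1}$ plus the $O(x)$ modulus and kill $K$ by integrating. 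That works, but note two points: the hypothesis yields only the pointwise bounds $g_\ell(x)-g_\ell(0)=O(x)$ and $c_\ell(x)-c_\ell(0)=O(x)$, not a Lipschitz bound on all of $[0,x_0]$, though these pointwise bounds are exactly what your estimate needs; and for $c_\ell(0)\geq2$ the $K$-term makes $\int_y^x w$ diverge as $y\downarrow0$ rather than behave like $x^{1-c_\ell(0)/2}$, which still contradicts continuity of $B^{(\ell-1)}$ at $0$.
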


\begin{proof}
The induction uses the following scalar estimate.  Suppose $Y$ is
bounded near zero and
\[
2xY'+\lambda(x)Y=H(x),
\qquad
\lambda(0)>0.
\]
The bounded solution is uniquely selected by
\[
Y(x)=\frac12\int_0^x
\exp\left(-\frac12\int_t^x\frac{\lambda(s)}s\,ds\right)
\frac{H(t)}t\,dt.
\]
After the substitution $t=xr$, for sufficiently small $x$ and every
$0\leq\ell\leq j$, the $\ell$th $x$-derivative of the integrand is bounded by
\[
C_\ell r^{\lambda(0)/4-1}
\bigl(1+|\log r|^{N_\ell}\bigr),
\qquad 0<r<1,
\]
for some integer $N_\ell\geq0$.  Since $\lambda(0)>0$,
\[
\int_0^1r^{\lambda(0)/4-1}
\bigl(1+|\log r|^{N_\ell}\bigr)\,dr<\infty.
\]
Differentiation under the integral sign shows that if $\lambda$ and $H$ are
$C^j$ at zero, then $Y$ is $C^j$ there and
\[
Y(0)=\frac{H(0)}{\lambda(0)}.
\]

Equation~\eqref{eq:regular-singular-Z} gives $Z\in C^1$ and
$Z-Z_\circ=O(x)$, because its right-hand side extends continuously to zero.
The compatibility relation and $A-A_\circ=O(x)$, together with
\eqref{eq:regular-singular-A}, imply that $A'$ is bounded on $(0,x_0]$.
Differentiating \eqref{eq:regular-singular-A} for $x>0$ and putting $Y=A'$
gives
\[
2xY'+\bigl(3-\partial_A\mathcal F(x,A,Z)\bigr)Y
=\partial_x\mathcal F(x,A,Z)
+\partial_Z\mathcal F(x,A,Z)Z'.
\]
The coefficient and right-hand side extend continuously to zero, and the
limiting coefficient is positive.  The scalar estimate therefore extends
$A'$ continuously to zero.  Thus $A\in C^1$, and the regular equation for
$Z$ now gives $Z\in C^2$.

Inductively, assume $A\in C^\ell$ and $Z\in C^{\ell+1}$.  Differentiating
\eqref{eq:regular-singular-A} $\ell$ times and isolating the only term containing
$A^{(\ell)}$ from the chain rule gives, for $Y=A^{(\ell)}$,
\[
2xY'+
\bigl(2\ell+1-\partial_A\mathcal F(x,A,Z)\bigr)Y=H_\ell(x).
\]
Here $H_\ell$ contains only derivatives of $A$ through order $\ell-1$ and
of $Z$ through order $\ell$, so it is $C^1$ under the induction hypothesis.
The coefficient is also $C^1$ and has a positive limit.  The scalar estimate
shows that $Y$ is $C^1$ at zero.  Hence $A\in C^{\ell+1}$, after which the
regular equation gives $Z\in C^{\ell+2}$.  Induction proves the claimed
$C^\infty$ extension.
\end{proof}

\section{The general dimension bound and conditional strictness}
\label{sec:general}
The proof uses all the inequalities defining \(\Gamma_k^+\), rather than
only the equation for \(\sigma_k\).  Indeed, under the assumption
\(p>p_k(n)\),  \cref{lem:model}  shows that for every relevant value of \(\alpha\)
there is an index \(j\in\{1,\ldots,k\}\) such that
\[
R_j(\alpha)\leq-\delta_R.
\]
The radial block decomposition in \cref{lem:acceleration} then converts this inequality,
together with \(\sigma_j>0\), into the uniform estimate
\[
\eta''\geq h>0.
\]

\subsection{Proof of the model-threshold bound}

\begin{proof}[Proof of \cref{thm:main}]
Suppose $p>p_k(n)$.  By \eqref{eq:old-use},
$\operatorname{codim}\Sigma\geq2$, so part~(ii) of
\cref{lem:peak-preparations} applies.  Put $t=-\log r$ and
$f=\log(v/r)$.  By \cref{lem:scale-gradient}, $f$ is uniformly Lipschitz in
$t$, with constant $L_0$.  Set $H_*:=L_0+4$ and fix the constants in
\cref{lem:acceleration} corresponding to this value.  Choose
$c\in(0,\bar a)$ and then $T$ so large that
\begin{equation}
\Delta:=cT-\frac{(L_0+c)^2}{2h}>0.
\label{eq:Delta}
\end{equation}

Let $r_*$ and $C_*$ be the constants in part~(ii) of
\cref{lem:peak-preparations}.  Choose $K>2C_*$ and then
$\Lambda$ so large that, with
\[
\mu=\frac{Ke^T}{\Lambda},
\]
one has $C_*\mu<\varepsilon_0$.
Take $t_0$ so large that all subsequent boxes lie inside the tubular
  neighborhood, satisfy $\Lambda\rho_j<\operatorname{inj}(\Sigma)/4$, and have
$r<\min\{r_0,r_*\}$.  Set
\[
t_j=t_0+jT,
\qquad
  \rho_j=e^{-t_j}.
\]
  Starting from $X_0=(y_0,\rho_0,\theta_0)$, define recursively
\[
  P_j(y)=\mu\frac{d_\Sigma(y,y_j)^2}{\rho_j^2}
\]
and
\[
\eta_j(t)=
  \max_{\substack{d_\Sigma(y,y_j)\leq\Lambda\rho_j\\
\theta\in\Sn(N_y\Sigma)}}
\{f(t,y,\theta)-P_j(y)\}.
\]
Since every member of the maximizing family is $L_0$-Lipschitz in $t$,
$\eta_j$ has the same Lipschitz constant on $[t_j,t_{j+1}]$.
  At a boundary point of the base ball,
  \[
  P_j(y)=\mu\Lambda^2=Ke^T\Lambda.
  \]
  For every $t\in[t_j,t_{j+1}]$, \eqref{eq:box-osc} gives
  \[
  f(t,y,\theta)-P_j(y)
  \leq
  \sup_{\vartheta\in\Sn(N_{y_j}\Sigma)}f(t,y_j,\vartheta)
  +C_*(\Lambda e^T+T+1)-Ke^T\Lambda.
  \]
  Since $K>2C_*$, after enlarging $\Lambda$ we have
  \[
  Ke^T\Lambda>C_*(\Lambda e^T+T+1).
  \]
  Hence
  \[
  \sup_{\substack{d_\Sigma(y,y_j)=\Lambda\rho_j\\
  \theta\in\Sn(N_y\Sigma)}}
  \{f(t,y,\theta)-P_j(y)\}
  <
  \max_{\vartheta\in\Sn(N_{y_j}\Sigma)}f(t,y_j,\vartheta).
  \]
 Since \(P_j(y_j)=0\),
\[
\max_{\vartheta\in\mathbb S(N_{y_j}\Sigma)}
f(t,y_j,\vartheta)
=
\max_{\vartheta\in\mathbb S(N_{y_j}\Sigma)}
\bigl\{f(t,y_j,\vartheta)-P_j(y_j)\bigr\}.
\]
Thus the penalized function
\[
(y,\theta)\longmapsto f(t,y,\theta)-P_j(y)
\]
takes at an interior point with \(y=y_j\) a value strictly larger than
all its values on the boundary of the base ball.  Hence every maximizer
satisfies
\[
d_\Sigma(y,y_j)<\Lambda\rho_j.
\]
Moreover,
\[
\eta_j(t_j)\geq f(X_j).
\]

Suppose that $\phi\in C^2$ touches $\eta_j$ from above at an interior time
$t$, and choose $(y,\theta)$ attaining the maximum in the definition of
$\eta_j(t)$.  For nearby $\tau$ and every admissible
$(\widetilde y,\widetilde\theta)$,
\[
f(\tau,\widetilde y,\widetilde\theta)-P_j(\widetilde y)
\leq\eta_j(\tau)\leq\phi(\tau),
\]
with equality at $(t,y,\theta)$.  Therefore
\[
(\tau,\widetilde y,\widetilde\theta)
\longmapsto\phi(\tau)+P_j(\widetilde y)
\]
touches $f$ from above at $(t,y,\theta)$, and
\[
\partial_tf(t,y,\theta)=\phi'(t),
\qquad
D_\Sigma f(t,y,\theta)=D_\Sigma P_j(y),
\qquad
D_\theta f(t,y,\theta)=0.
\]
Since $\eta_j$ is $L_0$-Lipschitz and
\eqref{eq:scale-gradient} holds, these identities give
\[
|\phi'(t)|+r|D_\Sigma P_j(y)|\leq H_*.
\]
The function
\[
r\exp\{\phi(-\log r)+P_j(y)\}
\]
is consequently an upper test for $v$ satisfying the contact bounds in
\cref{lem:acceleration}.  Hessian ordering and monotonicity of the
G\aa rding cone put its matrix in $\Gk$, while
\eqref{eq:box-penalty} gives
\[
r^2|D_\Sigma^2P_j|\leq C_*\mu<\varepsilon_0.
\]

  For fixed $j$, let
  \[
  K_j=
  \bigl\{(y,\theta):d_\Sigma(y,y_j)\leq\Lambda\rho_j,
  \ \theta\in\Sn(N_y\Sigma)\bigr\}.
  \]
  This set is compact.  On a slightly larger tube box there is $C_j>0$ such
  that
  \[
  \partial_t^2f(t,y,\theta)\geq-C_j
  \]
  for $(y,\theta)\in K_j$.  Since $P_j$ is independent of $t$,
  \[
  \eta_j(t)+\frac{C_j}{2}t^2
  =\max_{(y,\theta)\in K_j}
  \left\{f(t,y,\theta)-P_j(y)+\frac{C_j}{2}t^2\right\}
  \]
  is convex.  Hence $\eta_j$ is locally semiconvex.  No bound on $C_j$
  uniform in $j$ is required in part~(i) of
  \cref{lem:peak-preparations}.  By \cref{lem:acceleration}, every interior
  upper test satisfies
\[
\phi'\leq\bar a\quad\Longrightarrow\quad\phi''\geq h.
\]
Part~(i) of \cref{lem:peak-preparations} and \eqref{eq:Delta} give
\[
\eta_j(t_{j+1})\geq\eta_j(t_j)+\Delta.
\]
Choose a maximizer $X_{j+1}$ at the terminal time.  Since $P_j\geq0$,
resetting the next penalty at its base point gives the unpenalized growth
\begin{equation}
f(X_{j+1})\geq f(X_j)+\Delta,
\qquad
f(X_j)\geq f(X_0)+j\Delta.
\label{eq:growth}
\end{equation}

The points $X_j,X_{j+1}$ satisfy the hypotheses of the path construction in
part~(ii) of \cref{lem:peak-preparations}.  Using \eqref{eq:growth} in
\eqref{eq:box-path} gives
\[
\operatorname{Length}_g(\gamma_j)
  \leq C\frac{\rho_j}{\rho_{j+1}e^{f(X_{j+1})}}
\leq C_0e^{-j\Delta},
\]
where $C_0$ is independent of $j$ and absorbs
$T,\Lambda$, and $f(X_0)$.
The concatenated curve has finite $g$-length.  Moreover, the construction of
the maximizers gives
\[
  d_\Sigma(y_j,y_{j+1})\leq\Lambda\rho_j.
\]
  Since $\sum_j\rho_j<\infty$, the sequence $(y_j)$ is Cauchy in the compact
manifold $\Sigma$ and hence converges to some $y_\infty\in\Sigma$.  Also
  $\rho_j\to0$.  More precisely, for every $Z\in\gamma_j$,
\[
\operatorname{dist}_{\rm Eucl}(Z,\Sigma)
\leq
  \operatorname{dist}_{\rm Eucl}(Z,X_j)+\rho_j
\leq
  \operatorname{Length}_{\rm Eucl}(\gamma_j)+\rho_j
\leq
  \bigl(1+C_*(\Lambda+1)\bigr)\rho_j.
\]
Thus the entire tail of the concatenated curve approaches $\Sigma$ and
leaves every compact subset of $\Sn^n\setminus\Sigma$.  It has finite
$g$-length but no limit in $\Sn^n\setminus\Sigma$, contradicting metric
completeness.  Hence $p\leq p_k(n)$.
\end{proof}

\subsection{Strictness under finite linear contact}

The equality examples rule out a universal strict theorem but do not prevent
strictness under additional asymptotic control.

\begin{theorem}
\label{thm:strict-contact}
In addition to the hypotheses of \cref{thm:main}, assume
\[
0<L_*:=\limsup_{x\to\Sigma}
\frac{v(x)}{\operatorname{dist}(x,\Sigma)}<\infty.
\]
Then
\[
p<p_k(n).
\]
\end{theorem}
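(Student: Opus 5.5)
Since \cref{thm:main} already gives $p\leq p_k(n)$, it suffices to rule out $p=p_k(n)$. The plan is a blow-up at the scale of the linear contact. It produces a limit profile touched from above by the exact cone $L_*\,\operatorname{dist}(\cdot,\Rr^p)$, whose Schouten spectrum is a multiple of the critical model spectrum $D(0)$. At the endpoint $p=p_k(n)$, \cref{lem:model} gives $d=n-2p-2=d_{k,n}$. Then $\mathcal P_{k,n}(d)=0$, so
\[
\sigma_k(D(0))=c_{n,p,k}=\mathcal P_{k,n}(d_{k,n})/k!=0 .
\]
This is incompatible with the constant $\kappa>0$ transferred to the contact point.

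\emph{Step 1 (blow-up).} Choose $x_i\to\Sigma$ with $v(x_i)/\rho_i\to L_*$, where $\rho_i=\operatorname{dist}(x_i,\Sigma)$, and let $y_i\in\Sigma$ be the nearest points. In Euclidean coordinates centered at $y_i$, rotated so that $T_{y_i}\Sigma=\Rr^p\times\{0\}$, set
\[
v_i(z)=\rho_i^{-1}v(y_i+\rho_iz).
\]
The operator $\mathcal A[v]=vD^2v-\frac12|Dv|^2I$ is invariant under $v\mapsto \lambda^{-1}v(\lambda\,\cdot)$. Hence each $v_i$ satisfies $\lambda(\mathcal A[v_i])\in\Gk$ and $\sigma_k(\mathcal A[v_i])=\kappa$ on the rescaled complement of $\Sigma$, and the rescaled copies of $\Sigma$ converge smoothly to $\Rr^p$. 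By \cref{lem:scale-gradient}, $|D\log v_i|\leq 2/\operatorname{dist}(z,\Sigma_i)$. Together with $v_i(e_i)\to L_*>0$ at the rescaled point $e_i=(x_i-y_i)/\rho_i$, which has norm $1$, this gives locally uniform two-sided bounds and Lipschitz bounds on compacts of $\Rr^n\setminus\Rr^p$. Passing to a subsequence, $v_i\to v_\infty$ locally uniformly, with $e_i\to e$, $|e|=1$, $e\perp\Rr^p$ and $v_\infty(e)=L_*>0$. The definition of $L_*$ as a limsup gives
\[
v(x)\leq (L_*+o(1))\operatorname{dist}(x,\Sigma),
\]
and therefore $v_\infty(z)\leq L_*|z''|$, where $z''$ denotes the normal component. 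Thus $\psi(z)=L_*|z''|$ touches $v_\infty$ from above at $e$.

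\emph{Step 2 (stability of the subsolution inequality).} For smooth $v>0$ and a $C^2$ function $\phi\geq v$ with equality at a point, we have $\phi=v$, $D\phi=Dv$ and $D^2\phi\geq D^2v$ there. Hence $\mathcal A[\phi]\geq\mathcal A[v]$, and monotonicity of $\Gk$ together with monotonicity of $\sigma_k$ on $\Gk$ yields $\lambda(\mathcal A[\phi])\in\Gk$ and $\sigma_k(\mathcal A[\phi])\geq\kappa$. I would apply this to the strict test $\phi_\varepsilon=\psi+\varepsilon|z-e|^2$, which is smooth near $e$ since $e\notin\Rr^p$. Uniform convergence gives constants $c_i$ such that $\phi_\varepsilon+c_i$ touches $v_i$ from above at points $z_i\to e$, with $c_i\to0$. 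This yields $\sigma_k(\mathcal A[\phi_\varepsilon+c_i](z_i))\geq\kappa$ with the matrix in $\Gk$. Letting $i\to\infty$ by continuity, and then $\varepsilon\to0$, gives
\[
\lambda(\mathcal A[\psi](e))\in\overline{\Gk},\qquad \sigma_k(\mathcal A[\psi](e))\geq\kappa .
\]

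\emph{Step 3 (model computation and contradiction).} For $\psi=L_*r$ we have $rD^2r=\Pi_{\rm ang}$ and $|Dr|=1$. Hence
\[
\mathcal A[\psi]=\frac{L_*^2}{2}\bigl(2\Pi_{\rm ang}-I\bigr),
\]
whose spectrum is $\frac{L_*^2}{2}D(0)$: $+1$ with multiplicity $q$ and $-1$ with multiplicity $p+1$. Therefore
\[
\sigma_k(\mathcal A[\psi](e))=\Bigl(\frac{L_*^2}{2}\Bigr)^k c_{n,p,k}=0<\kappa,
\]
which is a contradiction. This uses $L_*<\infty$ (so that $\psi$ is finite) and $L_*>0$ (so that $v_\infty(e)>0$ and the contact is nondegenerate).

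\emph{Main obstacle.} I expect the hardest step to be Step 1, specifically extracting a locally uniform limit that stays positive, so that the smooth touching argument of Step 2 applies near $e$. Positivity near $e$ follows from $L_*>0$ combined with the logarithmic gradient bound. The upper barrier $v_\infty\leq L_*|z''|$ requires care, because the limsup controls $v$ only asymptotically. I would handle this by using the uniform estimate $v\leq(L_*+\epsilon)\rho$ for $\rho<\rho(\epsilon)$ on the rescaled region $\{|z|\leq R\}$ and letting $i\to\infty$ with $\epsilon\to0$.
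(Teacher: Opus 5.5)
Your proposal is correct and follows the paper's proof essentially step for step. Both arguments blow up along a sequence realizing $L_*$ and touch the limit from above at the contact point by the cone $L_*|z''|$. Both then use the quadratic penalization $\psi+\varepsilon|z-e|^2$ to pass $\lambda(\mathcal A)\in\overline{\Gk}$ and $\sigma_k\geq\kappa$ to the limit. The model spectrum then gives $(L_*^2/2)^k c_{n,p,k}\geq\kappa$, which is impossible because $c_{n,p,k}=0$ at $p=p_k(n)$.

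There are only two minor differences. First, the paper uses the upper half-relaxed limit, so it needs no Arzel\`a--Ascoli compactness from \cref{lem:scale-gradient}. Second, the paper explicitly checks that the finite positive contact condition is unchanged when passing from the spherical to the Euclidean gauge. You left this routine step implicit.
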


\begin{proof}
Let $\Pi:\mathbb R^n\to\Sn^n$ be the stereographic chart used above and set
$\Sigma_{\rm E}:=\Pi^{-1}(\Sigma)$.  Write
\[
\Pi^*g_{\Sn^n}=\omega^2|dx|^2.
\]
If $v_{\Sn}$ and $v_{\rm E}$ denote the spherical and Euclidean conformal
factors, respectively, then
\[
v_{\Sn}\circ\Pi=\omega v_{\rm E}.
\]
For $x\to\Sigma_{\rm E}$, let $y_x\in\Sigma_{\rm E}$ be a Euclidean nearest
point.  Tubular-coordinate expansion gives, uniformly as
$x\to\Sigma_{\rm E}$,
\[
\operatorname{dist}_{\Sn^n}(\Pi(x),\Sigma)
=\omega(y_x)\operatorname{dist}_{\rm E}(x,\Sigma_{\rm E})
\bigl(1+o(1)\bigr).
\]
Consequently,
\[
\frac{v_{\Sn}(\Pi(x))}
{\operatorname{dist}_{\Sn^n}(\Pi(x),\Sigma)}
=\frac{\omega(x)}{\omega(y_x)}
\frac{v_{\rm E}(x)}{\operatorname{dist}_{\rm E}(x,\Sigma_{\rm E})}
\bigl(1+o(1)\bigr)
=\frac{v_{\rm E}(x)}{\operatorname{dist}_{\rm E}(x,\Sigma_{\rm E})}
\bigl(1+o(1)\bigr).
\]
Thus the finite positive contact condition is invariant under the change of
gauge.  We henceforth write $v$ and $\Sigma$ for the Euclidean conformal
factor and singular submanifold, and retain the notation $L_*$ for the
limsup.  Choose $x_j\to\Sigma$ such that
\[
r_j:=\operatorname{dist}_{\rm E}(x_j,\Sigma)\longrightarrow0,
\qquad
\frac{v(x_j)}{r_j}\longrightarrow L_*,
\]
and let $y_j\in\Sigma$ be a Euclidean nearest point to $x_j$.  Set
\[
e_j:=\frac{x_j-y_j}{r_j}\in N_{y_j}\Sigma.
\]
Fix a unit vector $e_0\in\{0\}\times\mathbb R^{n-p}$ and choose
$Q_j\in O(n)$ such that
\[
Q_j(\mathbb R^p\times\{0\})=T_{y_j}\Sigma,
\qquad
Q_j(\{0\}\times\mathbb R^{n-p})=N_{y_j}\Sigma,
\qquad
Q_je_0=e_j.
\]
Define
\[
v_j(X)=r_j^{-1}v(y_j+r_jQ_jX),
\qquad
\Sigma_j=r_j^{-1}Q_j^{-1}(\Sigma-y_j).
\]
After taking a subsequence, the smooth rescaled submanifolds converge on
compact sets to $\mathbb R^p\times\{0\}$, and
$\operatorname{dist}(X,\Sigma_j)\to|Z|$ locally uniformly, where
$X=(Y,Z)$.  The definition of the global limsup means that, for every
$\epsilon>0$, the inequality
\[
v(x)\leq(L_*+\epsilon)\operatorname{dist}(x,\Sigma)
\]
holds throughout a sufficiently small fixed neighborhood of $\Sigma$.
Consequently
\[
v_j(X)\leq(L_*+\epsilon)(|Z|+o(1))
\]
locally uniformly.  On the other hand,
$v_j(0,e_0)=v(x_j)/r_j\to L_*$.  Thus the upper half-relaxed limit $U$
satisfies
\[
U\leq L_*|Z|,
\qquad
U(0,e_0)=L_*,
\]
and is touched from above at $X_0=(0,e_0)$ by the smooth local test
\[
\Phi=L_*|Z|.
\]

We first prove that the upper test $\Phi$ satisfies
\[
\mathcal A[\Phi](X_0)\in\overline{\Gk},
\qquad
\sigma_k(\mathcal A[\Phi](X_0))\geq\kappa.
\]
Fix a ball $B_\rho(X_0)$ disjoint from $\{Z=0\}$ and put
$\Phi_\delta=\Phi+\delta|X-X_0|^2$.  The contact with $U$ is strict on the
boundary of this ball.  The definition of the upper half-relaxed limit then
provides local maxima $X_{j,\delta}\to X_0$ of
$v_j-\Phi_\delta$, after passage to a subsequence, with
\[
c_{j,\delta}:=v_j(X_{j,\delta})-
\Phi_\delta(X_{j,\delta})\longrightarrow0.
\]
The function $\Phi_\delta+c_{j,\delta}$ touches $v_j$ from above at
$X_{j,\delta}$.  Affine scaling and rotation preserve the Euclidean formula
\eqref{eq:A-v}, so
\[
\mathcal A[v_j](X)
=Q_j^T\mathcal A[v](y_j+r_jQ_jX)Q_j,
\qquad
\sigma_k(\mathcal A[v_j])=\kappa.
\]
At the contact point, the two functions have the same positive value and the
same gradient, while the test has the larger Hessian.  Therefore
\[
\mathcal A[\Phi_\delta+c_{j,\delta}]
-\mathcal A[v_j]
=v_j(X_{j,\delta})
\left(D^2(\Phi_\delta+c_{j,\delta})(X_{j,\delta})
-D^2v_j(X_{j,\delta})\right)\geq0.
\]
Monotonicity of $\Gamma_k^+$ and of $\sigma_k$ on this cone therefore gives
\[
\mathcal A[\Phi_\delta+c_{j,\delta}]\in\Gk,
\qquad
\sigma_k(\mathcal A[\Phi_\delta+c_{j,\delta}])\geq\kappa.
\]
First let $j\to\infty$ with $\delta>0$ fixed.  Since
$X_{j,\delta}\to X_0$ and $c_{j,\delta}\to0$, continuity gives
\[
\mathcal A[\Phi_\delta](X_0)\in\overline{\Gk},
\qquad
\sigma_k(\mathcal A[\Phi_\delta](X_0))\geq\kappa.
\]
Letting $\delta\downarrow0$, we obtain
\[
\mathcal A[\Phi](X_0)\in\overline{\Gk},
\qquad
\sigma_k(\mathcal A[\Phi](X_0))\geq\kappa.
\]
The eigenvalues of $\mathcal A[\Phi](X_0)$ are $L_*^2/2$ with multiplicity
$n-p-1$ and $-L_*^2/2$ with multiplicity $p+1$.  Thus
\[
\sigma_j(\mathcal A[\Phi](X_0))
=\left(\frac{L_*^2}{2}\right)^jc_{n,p,j}
\]
for $1\leq j\leq k$, and in particular
\[
\left(\frac{L_*^2}{2}\right)^kc_{n,p,k}\geq\kappa>0.
\]
The main theorem gives $p\leq p_k(n)$.  Equality would imply
$c_{n,p,k}=0$ by \cref{lem:model}, a contradiction.
\end{proof}

\section[The quadratic case and optimality]
{The quadratic case and optimality}\label{sec:k2}

For $k=2$, the model threshold and the upper-contact remainder are quadratic.
We first display the resulting simplification of the general proof and then
construct complete metrics attaining the critical dimension.

\subsection{The quadratic specialization}

For comparison, the algebraic part of the dimension estimate is elementary
when $k=2$.  Put $d=n-2p-2$.  The recurrence
\eqref{eq:recurrence} gives
\[
\mathcal P_{2,n}(d)=d^2-n.
\]
Therefore
\begin{equation}
p_2(n)=\frac{n-2-\sqrt n}{2}.
\label{eq:p2}
\end{equation}
If $p>p_2(n)$, then $0<d<\sqrt n$, and the two model functions are
\begin{align*}
R_1(\alpha)&=d+(n-2)\alpha,\\
R_2(\alpha)&=\frac{d^2-n}{2}
+\bigl(d(n-3)-2\bigr)\alpha
+\frac{(n-1)(n-4)}{2}\alpha^2.
\end{align*}
Thus $R_1(0)>0$ and $R_2(0)<0$.  The cone-monotonicity argument in
\cref{lem:model}, together with $R_1(\alpha)\leq-2p$ for
$\alpha\leq-1$, gives constants $\bar a\in(0,1)$ and $\delta_2>0$ such that
\[
\min\{R_1(\alpha),R_2(\alpha)\}\leq-\delta_2
\qquad\text{for every }\alpha\leq\bar a.
\]
For an upper test \eqref{eq:test}, set
\[
\chi=\frac{r^2|D_\Sigma P|^2}{1-\eta'},
\qquad
\alpha=\eta'-\chi.
\]
Then \eqref{eq:Gidentity} becomes
\[
G_1=(1-\eta')R_1(\alpha),
\qquad
G_2=(1-\eta')^2\bigl(R_2(\alpha)-4\chi\bigr).
\]
This is the explicit $k=2$ version of the separation used in
\cref{lem:acceleration}; it yields the same implication
\[
\eta'\leq\bar a
\quad\Longrightarrow\quad
\eta''\geq h_2
\]
under the contact hypotheses of that lemma.  The argument in the proof of
\cref{thm:main} then applies without change and excludes $p>p_2(n)$. 

\subsection{Equality at the critical dimension}

The critical value in \eqref{eq:p2} is an integer precisely when
$n=m^2$ for an integer $m\geq3$.  In that case
\[
p=p_2(n)=\frac{m^2-m-2}{2}.
\]
We construct a complete metric at this critical value.

\begin{proof}[Proof of \cref{thm:sharp}]
Put
\[
q=\frac{m(m+1)}2,
\qquad
P_-=p+1=\frac{m(m-1)}2.
\]
In tubular coordinates about an equatorial $\Sn^p\subset\Sn^n$,
\[
g_{\Sn^n}=d\rho^2+\cos^2\rho\,g_{\Sn^p}
+\sin^2\rho\,g_{\Sn^q}.
\]
With $\sinh s=\cot\rho$,
\begin{equation}
g_0:=\sin^{-2}\rho\,g_{\Sn^n}
=ds^2+\sinh^2s\,g_{\Sn^p}+g_{\Sn^q},
\label{eq:g0}
\end{equation}
so $(\Sn^n\setminus\Sn^p,g_0)$ is
$\Hh^{p+1}(-1)\times\Sn^q(1)$.  The Schouten eigenvalues of $g_0$ are
$-1/2$ with multiplicity $P_-$ and $1/2$ with multiplicity $q$.  Since
$q-P_-=m$ and $P_-+q=m^2$,
\[
\sigma_1(g_0^{-1}A_{g_0})=\frac m2>0,
\qquad
\sigma_2(g_0^{-1}A_{g_0})=0.
\]

Seek $g=e^{-2\eta(s)}g_0$, and set
\[
a=\eta',
\qquad
z=e^{-4\eta}.
\]
The eigenvalue blocks of $2e^{-2\eta}g^{-1}A_g$ are
\[
\mu_0=-1+2a'+a^2,
\qquad
\mu_H=-1+2a\coth s-a^2\quad[p],
\qquad
\mu_S=1-a^2\quad[q].
\]
For $u=a\coth s$, define
\begin{align*}
C(a,u)&=p\mu_H+q\mu_S,\\
B(a,u)&=\binom p2\mu_H^2+pq\mu_H\mu_S
+\binom q2\mu_S^2,\\
G(a,u)&=B(a,u)+(a^2-1)C(a,u).
\end{align*}
The constant equation is equivalent to
\begin{align}
a'&=\frac{4\kappa z-G(a,a\coth s)}
{2C(a,a\coth s)},
\label{eq:sigma2-system-a}\\
z'&=-4az.
\label{eq:sigma2-system-z}
\end{align}
At $(a,u)=(0,0)$,
\[
C=m+1,
\qquad
G=0,
\qquad
\partial_uG=L_c:=(m-2)(m+1)^2=2p(m+1).
\]
In a fixed neighborhood on which $C>0$, define the exact remainder
\[
R(a,u,z):=
\frac{4\kappa z-G(a,u)}{2C(a,u)}
+pu-\frac{2\kappa}{m+1}z.
\]
The functions $C$ and $G$ are even in $a$, and direct differentiation at
the origin gives
\[
R(0,0,0)=0,
\qquad
DR(0,0,0)=0.
\]
Moreover, $R$ is affine in $z$.  Taylor's theorem therefore gives, after
possibly shrinking the neighborhood,
\[
|R(a,u,z)|\leq K\bigl(u^2+z|u|\bigr)
\]
whenever $|a|\leq|u|$ and $z\geq0$.  Equation~\eqref{eq:sigma2-system-a}
can consequently be written as
\begin{equation}
a'+p\coth s\,a
=\frac{2\kappa}{m+1}z+R,
\label{eq:sigma2-stable}
\end{equation}

We construct the solution from the smooth orbit $s=0$.  Smooth radiality
requires $a(s)=cs+O(s^3)$.  At the origin the radial direction joins the
$p$ hyperbolic angular directions, and the derivative of the $\sigma_2$
equation with respect to $c$ is $2P_-(m+1)>0$.  The implicit-function theorem
gives the positive branch
\begin{equation}
c=\frac{2\kappa}{P_-(m+1)}\varepsilon+O(\varepsilon^2)
\label{eq:sigma2-c}
\end{equation}
for $z(0)=\varepsilon>0$.

Equation~\eqref{eq:sigma2-stable}, together with
\eqref{eq:sigma2-system-z}, is equivalent to the Volterra system
\begin{align}
a(s)&=\sinh^{-p}s\int_0^s\sinh^p\tau
\left(\frac{2\kappa}{m+1}z(\tau)+R(\tau)\right)d\tau,
\label{eq:sigma2-volterra-a}\\
z(s)&=\varepsilon\exp\left(-4\int_0^sa(\tau)d\tau\right).
\label{eq:sigma2-volterra-z}
\end{align}
Put
\[
A_0=\frac{2\kappa}{(m+1)P_-}.
\]
On $[0,s_0]$ use the affine Banach space
\[
\mathcal X_\varepsilon:=\left\{(a,z)\in C([0,s_0])^2:
a(0)=0,\ z(0)=\varepsilon,\
\sup_{0<s\leq s_0}\frac{|a(s)|}{s}
+\sup_{0<s\leq s_0}\frac{|z(s)-\varepsilon|}{s^2}<\infty\right\}
\]
with metric
\[
d_*\bigl((a_1,z_1),(a_2,z_2)\bigr)
:=\sup_{0<s\leq s_0}\frac{|a_1(s)-a_2(s)|}{s}
+\sup_{0<s\leq s_0}\frac{|z_1(s)-z_2(s)|}{s^2}.
\]
This is complete.  Consider its closed subset
\[
\mathcal B=\left\{(a,z):
 \sup_{0<s\leq s_0}\frac{|a(s)|}{s}\leq M\varepsilon,
 \quad
 \sup_{0<s\leq s_0}\frac{|z(s)-\varepsilon|}{s^2}
 \leq3M\varepsilon^2\right\},
\]
where
\[
M>\max\left\{2A_0,\frac{4\kappa}{p(m+1)}\right\}
\]
is fixed once and for all.  Decrease $s_0$ and $\varepsilon$ so that $z>0$
on this set.  Here $u=a\coth s=O(\varepsilon)$, and the explicit formula for
$R$ and the vanishing of its first derivatives at the origin give
\begin{align*}
|R|&\leq C\varepsilon^2,\\
|R(a_1,u_1,z_1)-R(a_2,u_2,z_2)|
&\leq C\varepsilon
\bigl(|(a_1-a_2)\coth s|+|z_1-z_2|\bigr).
\end{align*}
The elementary estimates
\[
\sinh^{-p}s\int_0^s\sinh^p\tau\,d\tau
=\frac{s}{P_-}+O(s^3),
\qquad
\int_0^s\tau\,d\tau=\frac{s^2}{2}
\]
show that the right sides of
\eqref{eq:sigma2-volterra-a}--\eqref{eq:sigma2-volterra-z} define a map
$\mathcal T$.  If $(\widetilde a,\widetilde z)=\mathcal T(a,z)$, then
\[
\sup_{0<s\leq s_0}\frac{|\widetilde a(s)|}{s}
\leq A_0\varepsilon+C\varepsilon^2+C\varepsilon s_0^2
<M\varepsilon.
\]
Choose $s_0$ first and then $\varepsilon$.  Moreover,
\[
\left|\int_0^s a(\tau)\,d\tau\right|
\leq\frac{M\varepsilon s^2}{2},
\]
and hence
\[
\frac{|\widetilde z(s)-\varepsilon|}{s^2}
\leq2M e^{2M\varepsilon s_0^2}\varepsilon^2
\leq3M\varepsilon^2.
\]
Thus $\mathcal T(\mathcal B)\subset\mathcal B$.  For two pairs in
$\mathcal B$, the same estimates, divided by the weights $s$ and $s^2$,
give
\[
d_*\bigl(\mathcal T(a_1,z_1),\mathcal T(a_2,z_2)\bigr)
\leq C(\varepsilon+s_0^2)
d_*\bigl((a_1,z_1),(a_2,z_2)\bigr).
\]
Decreasing the two parameters further makes $C(\varepsilon+s_0^2)<1$.
Thus $\mathcal T$ is a contraction.  Its fixed point satisfies
\[
a(s)=A_0\varepsilon s+O(\varepsilon^2s+\varepsilon s^3),
\qquad
z(s)=\varepsilon+O(\varepsilon^2s^2).
\]
It remains to justify that this continuous fixed point has the required
smoothness at the collapsed orbit.  Put $U(s)=a(s)/s$ for $s>0$.  Since
$a=sU$, $u=a\coth s=U(s)s\coth s$, the exact remainder can be written
\[
R(a(s),u(s),z(s))=\widehat R(s^2,U(s),z(s)),
\]
where $\widehat R$ is smooth.  Indeed, $s\coth s$ is smooth and even, while
$R$ is even in its first argument.  The vanishing of $DR$ at the origin also
gives $\partial_U\widehat R=O(\varepsilon)$ in the small box above.  Dividing the
  equation~\eqref{eq:sigma2-stable} by the radial scale gives
\[
sU'+(1+p\,s\coth s)U
=\frac{2\kappa}{m+1}z+\widehat R(s^2,U,z).
\]
The compatibility equation
\[
P_-c=\frac{2\kappa}{m+1}\varepsilon+\widehat R(0,c,\varepsilon)
\]
is precisely the implicit equation used in \eqref{eq:sigma2-c}; it has a
unique small root $c=A_0\varepsilon+O(\varepsilon^2)$.  Subtracting this
equation and applying the mean-value theorem in $U$ gives
\[
s(U-c)'+\lambda(s)(U-c)=g(s),
\qquad
\lambda(s)=P_-+O(\varepsilon+s^2),
\qquad
g(s)=O(s^2).
\]
Here the estimate for $g$ uses $z(s)-\varepsilon=O(\varepsilon^2s^2)$,
$s\coth s-1=O(s^2)$, and the smoothness of $\widehat R$.  Shrinking
$\varepsilon$ and $s_0$ gives $\lambda\geq P_-/2$.
With $\tau=-\log s$ and $W(\tau)=U(e^{-\tau})-c$, put
$\widehat\lambda(\tau)=\lambda(e^{-\tau})$ and
$\widehat g(\tau)=g(e^{-\tau})$.  Boundedness eliminates the growing
homogeneous mode and yields the variation-of-constants formula
\[
W(\tau)=\int_\tau^\infty
\exp\left(-\int_\tau^\xi\widehat\lambda(\rho)\,d\rho\right)
\widehat g(\xi)\,d\xi.
\]
Consequently $U(s)-c=O(s^2)$ as $s\downarrow0$.

We may therefore define continuous functions $A,Z$ at $x=0$ by
\[
a(s)=sA(s^2),
\qquad
z(s)=Z(s^2),
\qquad
A(0)=c,
\qquad
Z(0)=\varepsilon.
\]
Under $x=s^2$, equations~\eqref{eq:sigma2-system-a} and
\eqref{eq:sigma2-system-z} become
\begin{align}
A+2xA_x&=\mathcal F(x,A,Z),
\label{eq:sigma2-regular-A}\\
Z_x&=-2AZ,
\label{eq:sigma2-regular-Z}
\end{align}
where, with the smooth convention $\psi(0)=1$,
\[
\psi(x)=\sqrt{x}\coth\sqrt{x},
\qquad
\mathcal F(x,A,Z)=
\frac{2\kappa}{m+1}Z+\widehat R(x,A,Z)-p\psi(x)A.
\]
Thus $\mathcal F$ is smooth and
\[
\partial_A\mathcal F(0,c,\varepsilon)=-p+O(\varepsilon).
\]
The preceding estimate gives $A-c=O(x)$.  The compatibility relation is
$c=\mathcal F(0,c,\varepsilon)$, and, after decreasing $\varepsilon$, for
every $\ell\geq1$ one has
\[
2\ell+1-\partial_A\mathcal F(0,c,\varepsilon)
=2\ell+1+p+O(\varepsilon)>0.
\]
Hence \cref{lem:regular-singular}, with
$\mathcal G(x,A,Z)=-2AZ$, proves that $A$ and $Z$ extend smoothly to zero.
Consequently $s\mapsto sA(s^2)$ and $s\mapsto Z(s^2)$ have smooth odd and
even extensions, respectively.  Thus the solution is smooth across $s=0$,
and its initial slope is the branch \eqref{eq:sigma2-c}.

We turn to global existence.  The initial expansion and
\eqref{eq:sigma2-c} give $a(s)>0$ for small $s>0$.  If $s_1>0$ were its first
  zero, then the left derivative at $s_1$ would be nonpositive, whereas
  \eqref{eq:sigma2-system-a}, evaluated at $a=u=0$, would give
$a'(s_1)=2\kappa z(s_1)/(m+1)>0$.  Therefore, throughout the maximal
existence interval,
\[
a>0\quad(s>0),
\qquad
0<z\leq\varepsilon.
\]

Extend $u=a\coth s$ continuously by $u(0)=c$, and let $S_{\max}$ be the
supremum of the numbers $S$ such that the solution exists on $[0,S]$ and
$0\leq u\leq M\varepsilon$ there.  The local construction makes this set
nonempty.  On $[0,S_{\max})$, the exact remainder bound gives
\[
a'+(p-K_1\varepsilon)\coth s\,a
\leq\frac{2\kappa}{m+1}\varepsilon.
\]
Put $\beta_0=p-K_1\varepsilon\geq p/2$ and
\[
\overline a(s)=\frac{2\kappa}{\beta_0(m+1)}
\varepsilon\tanh s.
\]
This is a supersolution because
\[
\overline a'+\beta_0\coth s\,\overline a
\geq\frac{2\kappa}{m+1}\varepsilon.
\]
Moreover, \eqref{eq:sigma2-c} gives
\[
c<\frac{2\kappa}{\beta_0(m+1)}\varepsilon
\]
for sufficiently small $\varepsilon$, so comparison after multiplication by
$\sinh^{\beta_0}s$ yields
\[
0\leq a(s)\leq\overline a(s),
\qquad
0\leq u(s)\leq\frac{2\kappa}{\beta_0(m+1)}\varepsilon.
\]
Since $\beta_0\to p$, after shrinking $\varepsilon$ we have
\[
\frac{2\kappa}{\beta_0(m+1)}
<\frac{4\kappa}{p(m+1)}<M.
\]
Thus $u\leq c_0\varepsilon$ on $[0,S_{\max})$ for a constant $c_0<M$.
On the region $0\leq u\leq c_0\varepsilon$,
\[
|C-(m+1)|\leq C\varepsilon,
\qquad
C\geq\frac{m+1}{2}.
\]
Consequently, the vector field defined by
\eqref{eq:sigma2-system-a}--\eqref{eq:sigma2-system-z} is smooth and uniformly
bounded on the relevant compact set of $(a,a\coth s,z)$-values.  If
$S_{\max}<\infty$, the standard continuation theorem for ordinary
differential equations, applied at the positive time $S_{\max}$, extends the
  solution beyond $S_{\max}$.  Since $c_0<M$, continuity also preserves
  $u\leq M\varepsilon$ on a larger interval, contradicting the definition of
  $S_{\max}$.
Therefore $S_{\max}=\infty$.  Thus the solution exists for every $s\geq0$,
and \eqref{eq:sigma2-system-a} also yields
\[
|a'|\leq C\varepsilon.
\]

The decreasing function $z$ has a limit $z_\infty\geq0$.  If this limit
were positive, then $z'=-4az$ would make $a$ integrable, whereas the lower
comparison
\[
a'+(p+K_2\varepsilon)\coth s\,a
\geq\frac{2\kappa}{m+1}z_\infty
\]
would give a positive lower bound for $a$ at large $s$.  Therefore
$z_\infty=0$.  For large $s$, the remainder estimate and the global small
bounds imply
\[
a'+\frac p2a\leq \frac{2\kappa}{m+1}z.
\]
Duhamel's formula, together with $z(s)\to0$, now gives $a(s)\to0$.

For large $s$,
\begin{equation}
a'+pa=\frac{2\kappa}{m+1}z+N,
\qquad
|N|\leq K(e^{-2s}a+a^2+za).
\label{eq:sigma2-asymptotic-stable}
\end{equation}
  For the comparison between the stable variable $a$ and the center variable
  $z$, choose $\delta>0$ and $\beta\in(0,p)$ so that
  \[
  4\delta<\beta,
  \qquad
  3K\delta<p-\beta.
  \]
  Since $a,z\to0$, after increasing $s_1$ we have
  \[
  e^{-2s}+a(s)+z(s)\leq\delta
  \]
  for $s\geq s_1$.  Equation~\eqref{eq:sigma2-asymptotic-stable} then
  implies, after absorbing the terms linear in $a$,
  \[
  a'+\beta a\leq C_1z.
  \]
  For $s_1\leq\tau\leq s$, the identity $z'/z=-4a$ and $0\leq a\leq\delta$
  give
  \[
  z(\tau)\leq z(s)e^{4\delta(s-\tau)}.
  \]
  Duhamel's formula therefore yields
  \[
  a(s)\leq e^{-\beta(s-s_1)}a(s_1)
  +C_1z(s)\int_{s_1}^s
  e^{-(\beta-4\delta)(s-\tau)}\,d\tau.
  \]
  Moreover,
  \[
  z(s)\geq z(s_1)e^{-4\delta(s-s_1)},
  \]
  so the first term is $o(z(s))$, while the integral is uniformly bounded.
  Consequently
  \[
  a=O(z).
  \]

  Set $y=a/z$.  Dividing \eqref{eq:sigma2-stable} by $z$, and using
  $z'/z=-4a$, $a=O(z)$, and the bound for the remainder, gives
\[
y'=\frac{2\kappa}{m+1}-py+o(1).
\]
  The function $y$ is bounded, and the integrating-factor formula for this
  scalar stable equation gives
\[
\frac az\longrightarrow\frac{2\kappa}{p(m+1)}
=\frac{4\kappa}{L_c}.
\]
Since $(1/z)'=4a/z$, it follows that
\[
\left(\frac1z\right)'=\frac{16\kappa}{L_c}+o(1).
\]
After integration,
\[
\frac1{z(s)}=\frac{16\kappa}{L_c}s+o(s),
\qquad
z(s)=\frac{L_c}{16\kappa s}(1+o(1)).
\]

The global estimates
\[
|a|+|a\coth s|+|a'|+z\leq C\varepsilon
\]
hold uniformly up to the smooth orbit $s=0$.  Hence the radial eigenvalue and
the $P_--1$ hyperbolic angular eigenvalues are $-1+O(\varepsilon)$, whereas
the $q$ spherical eigenvalues are $1+O(\varepsilon)$.  Consequently,
\[
\sigma_1=q-P_-+O(\varepsilon)=m+O(\varepsilon).
\]
After decreasing $\varepsilon$, we have
\[
\sigma_1\geq\frac m2>0.
\]
The defining curvature equation gives
\[
\sigma_2=4\kappa z>0
\]
for the normalized matrix.  Since
\[
\Gamma_2^+=\{\lambda:\sigma_1(\lambda)>0,\ \sigma_2(\lambda)>0\},
\]
it follows that $\lambda(g^{-1}A_g)\in\Gtwo$ everywhere.

The solution is smooth at $s=0$ by the preceding regularity argument.  At infinity,
\[
e^{-\eta(s)}=z(s)^{1/4}
=\left(\frac{L_c}{16\kappa s}\right)^{1/4}(1+o(1)).
\]
If
\[
F(S)=\int_0^S e^{-\eta(r)}dr,
\]
then $F(S)\to\infty$.  Every escaping curve has an unbounded $s$
subsequence because bounded hyperbolic balls and $\Sn^q$ are compact, and
if $s(t_j)\to\infty$, then
\[
\operatorname{Var}(F\circ s)
\geq|F(s(t_j))-F(s(t_0))|\longrightarrow\infty.
\]
Moreover,
\[
\operatorname{Length}_g(\gamma)
\geq\int e^{-\eta(s)}|ds|
=\operatorname{Var}(F\circ s).
\]
Thus $g$ is complete.  Finally, \eqref{eq:g0} gives
\[
g=V(\rho)^{-2}g_{\Sn^n},
\qquad
V(\rho)=\sin\rho\,e^{\eta(s)}.
\]
Since $s=\log(2/\rho)+o(1)$,
\[
V(\rho)=\left(\frac{16\kappa}{L_c}\right)^{1/4}
\rho\left(\log\frac1\rho\right)^{1/4}(1+o(1)).
\]
The metric is therefore a smooth global conformal metric on
$\Sn^n\setminus\Sn^p$, with a complete logarithmic end at $\Sn^p$.
\end{proof}

The equality examples include
\[
(n,p)=(9,2),(16,5),(25,9),(36,14),\ldots.
\]

\begin{remark}[Optimality of the dimension bound]
An analogous cohomogeneity-one construction gives equality at every integral
model endpoint.  More precisely, suppose
\[
p:=p_k(n)\in\mathbb Z
\]
and let $\Sigma=\mathbb S^p\subset\mathbb S^n$ be the totally geodesic
equatorial sphere
\[
\Sigma=
\bigl\{(x,0)\in\mathbb R^{p+1}\times\mathbb R^{n-p}:|x|=1\bigr\}.
\]
For every $\kappa>0$, the same type of construction as for $k=2$ yields a
smooth complete conformal metric $g$ on
$\mathbb S^n\setminus\Sigma$ satisfying
\[
\lambda(g^{-1}A_g)\in\Gamma_k^+,
\qquad
\sigma_k(g^{-1}A_g)=\kappa.
\]
Thus, whenever $p_k(n)$ is an integer, the dimension bound
\[
p\leq p_k(n)
\]
is attained.  Consequently, under the hypotheses of \cref{thm:main}, this
bound is optimal and cannot in general be replaced by
\[
p<p_k(n).
\]
\end{remark}

\paragraph{Declaration of generative AI and AI-assisted technologies in the manuscript preparation process}

 During the preparation of this work, the authors used ChatGPT (OpenAI) to assist in converting their proof manuscripts into a preliminary LaTeX draft and to identify potential grammatical and logical errors in the manuscript. After using this tool, the authors reviewed and edited the content as needed and take full responsibility for the content of the published article.

\paragraph{Acknowledgments}

The first and third authors were supported by the National Natural Science Foundation of China [grant number: 2025YFA1017601].

\bibliographystyle{alpha}
\bibliography{reference}

\begingroup
\footnotesize
\noindent
(Jiahuan Li) School of Mathematical Sciences, University of Science and
Technology of China, Hefei, 230026, Anhui Province, P.R. China.\\
Email address: \href{mailto:jiahuan@mail.ustc.edu.cn}{jiahuan@mail.ustc.edu.cn}\par\smallskip
\noindent
(Yilu Liu) School of Mathematical Sciences, University of Science and
Technology of China, Hefei, 230026, Anhui Province, P.R. China.\\
Email address: \href{mailto:liuylgeoanaly@mail.ustc.edu.cn}{liuylgeoanaly@mail.ustc.edu.cn}\par\smallskip
\noindent
(Xi-Nan Ma) School of Mathematical Sciences, University of Science and
Technology of China, Hefei, 230026, Anhui Province, P.R. China.\\
Email address: \href{mailto:xinan@ustc.edu.cn}{xinan@ustc.edu.cn}
\endgroup

\end{document}